\newtheorem{theorem}{Theorem}
\newtheorem{remark}{Remark}
\newtheorem{lemma}{Lemma}
\newcommand{\R}{\mathbb{R}}
\numberwithin{equation}{section}
\newcommand{\fin}{\hfill\rule{2mm}{2mm}\medskip }
\keywords{ Controllability of parabolic systems,  Carleman estimates, tree-like systems, reaction-diffusion systems.}
\title[Controllability of coupled parabolic systems]{Internal controllability of  parabolic systems with star and tree like couplings}
\author[C.G. Lefter]{C\u at\u alin-George Lefter}
\author[E.A. Melnig]{Elena-Alexandra Melnig}
\address[C.G. Lefter, E.A. Melnig]{ Octav Mayer Institute of Mathematics, Romanian Academy, Ia\c si Branch and Faculty of Mathematics, University "Al. I. Cuza" Ia\c si, Romania}
\email{catalin.lefter@uaic.ro, alex.melnig@yahoo.com}
\date{}
\subjclass[2010]{35K40, 35K58,  93B05, 93B07, 	93B18}
\begin{document}

\begin{abstract}{We consider systems of  parabolic equations coupled in zero order terms in a star-like or a  tree-like shape,  with an internal control acting in only one of the equations. We obtain local exact  controllability  to the stationary solutions of the  system,  under hypotheses concerning the supports of the coupling functions. The key point is establishing  Carleman estimates with appropriate observation operators for the adjoint to the linearized system, which allows the study of the controllability problem, in either linear or nonlinear cases, in an $L^\infty$ framework.}
\end{abstract}
\maketitle

\section{Introduction}
In this paper we consider some classes of semilinear systems of parabolic equations coupled  in zero order terms. 
We are interested in controllability of such systems to  stationary solutions by only one scalar control distributed in a subdomain and acting in only one of the equations. 

The study of  controlled systems of parabolic equations needs appropriate observability estimates for the adjoint system. These observability estimates are usually derived from global Carleman estimates.
Global Carleman estimates are by now a classical tool in proving observability inequalities, and they were established in the context of controllability  for parabolic equations by  O.Yu.Imanuvilov (see O.Yu.Imanuvilov and A.Fursikov  \cite{furima1996}). Since then this type of estimates was  extensively developed, refined and used in other contexts, like control problems with small number of controls, stabilization or inverse problems.

Controllability for parabolic systems with a reduced number of controls needs observability estimates of Carleman type  with  partial observations. There is an extensive literature concerning such problems; for a selection of titles we refer for example to \cite{lissy_zuazua} and the references therein.

In the case of zero order couplings with constant or time dependent coupling coefficients there exists a particular interest in obtaining algebraic conditions of Kalman type for controllability; in this direction we cite the papers of  F.Ammar-Khodja, A.Benabdallah, C.Dupaix and M.Gonz\'ales-Burgos \cite{khodja_burgos1, khodja_burgos2} or the work of F.Ammar-Khodja, F.Chouly and M. Duprez \cite{ammar_chouly}.  

Observability estimates for linear systems (not only parabolic) coupled with constant coupling coefficients in the dominant part and/or in the zero order terms were established by E.Zuazua and P.Lissy \cite{lissy_zuazua}; such estimates are obtained under Kalman rank conditions satisfied by the pair of the coupling and  control matrices.  

The results we present in our paper extend the results in \cite{teresa_burgos} where systems of parabolic equations with cascade type couplings in zero order terms are considered. The extension we propose works under hypotheses addressing two aspects of the systems under consideration: one is  the structure of the 
couplings, which describes in our case  either a star or a tree type graph; the second aspect refers to  the support of the coupling functions or, in the linear case, to the support of the coupling coefficients.

The strategy for proving  the controllability result relies on the linearization of the nonlinear system around a stationary state. 
The key step is 
 obtaining the null controllability for this linear system by using an observability inequality for the adjoint system. This observability inequality is consequence of an appropriate global Carleman estimate. This in turn is obtained by combining Carleman estimates for each of the equations, but relying on different auxiliary functions, which are in a particular order relation, made possible  by the special structure of the system. The idea of using different auxiliary functions in Carleman estimates is inspired by the work of G.Olive \cite{olive1} concerning controllability of parabolic systems with controls acting in different subdomains.

 The Carleman observability estimates we establish are more elaborated and are not direct consequences of the classical Carleman estimates. One reason for developing these Carleman estimates is the fact that trying to use the estimates from the paper of Luz de Teresa and M.Gonz\'ales-Burgos \cite{teresa_burgos} for cascade systems we realized that, written for the branches of the tree, they do not fit well together. Even when passing from the study of star type couplings to general tree type couplings one needs to use two Carleman estimates for each equation in an interior node of the graph and this is another quite technical point needed in our approach. The hypotheses concerning the supports of the coupling coefficients allow to construct appropriate auxiliary functions and  weights in the corresponding Carleman estimates which will finally fit well in order to give the desired global observability inequality.

Passing from the linearized system to the nonlinear system, one  needs an $L^\infty$ framework for controlability. The main reason is that the Carleman estimates we obtain are sensitive to zero order perturbations of the system. 
More regularity of the controls in the linearized problem is obtained as in the work of V.Barbu \cite{bar12002} (see also J.-M. Coron, S.Guerrero and L.Rosier \cite{corgueros2010}) by using regularizing properties of the parabolic flow in a bootstrap argument.  This step encounters supplementary technical challenges as it needs $L^\infty-L^2$ Carleman estimates with different weights in corresponding estimates for different equations. 
The $L^\infty$ controllability for the linearized system allows an approach to the controllability of the  nonlinear system by a fixed point argument, based on Kakutani theorem ( see also \cite{corgueros2010} or \cite{bar2000}).

%The coupling is done in such a way that the function evolving under the constrain of the control acts in every other equation in a %subdomain, such that the control acts in an indirect manner in all equations in such compatibility subdomains.
%For example, the choice of the weight functions in the paper of E. Zuazua, E.Fernandez-Cara, \cite{ferzua2000} is important in %obtaining optimal bounding constants in estimating the cost of controllability.
%%%%%%%%%%%%%%%%%%%%%%%%%%%%%%%%%%%%%%%%%%%%%%%%%%%%%%%%%%%%%%%%%%%%%%%%%

\section{Preliminaries and statement of the problem}
Let $\Omega\subset\R^N$ be a bounded connected domain with a $C^2$
boundary $\partial\Omega$ and let 
$\omega_0\subset\subset\Omega.$  
Let $T>0$ and denote by $Q=(0,T)\times\Omega$ and for $\omega\subset\Omega$ write $Q_\omega=(0,T)\times\omega$.
\medskip

We consider   systems of  $(n+1)$ parabolic equations coupled in zero order terms through nonlinear functions,  with one internally distributed control, acting in $\omega_0$ and entering only the first equation. The main goal  is obtaining   local exact controllability to some stationary solution for the nonlinear system.
\medskip

In the first part of the paper we study systems of parabolic equations with star-like couplings which refer to the sistuation where $y_k$ is actuated in the corresponding parabolic equation  through a nonliniarity depending  only  on $y^0,y^k$. Such a star-like coupled system has the form:
\begin{equation}\label{nonlinsystem}
\left\lbrace
\begin{array}{ll}
D_ty_{0}-\Delta y_0=\overline g_0(x)+f_0(x,y_0)+\chi_{\omega_0}u, & \text{ in }(0,T)\times\Omega,\\
D_ty_{i}-\Delta y_i=\overline g_i(x)+f_i(x,y_0,y_i),\, i\in\overline{1,n},\, &\text{ in }(0,T)\times\Omega,\\
y_0=...=y_{n}=0, &\text{ on }(0,T)\times\partial\Omega,\\
y(0,\cdot)=y^0,&
\end{array}
\right.
\end{equation} 
where $\overline g_j\in L^\infty(\Omega),\,  j\in\overline{0,n}$. 
We denote by  $\chi_{\omega_0}v$  the extension of $v:\omega_0\rightarrow\R$ with $0$ to the whole domain $\Omega$.
The control function is $u:[0,T]\times\omega_0\longrightarrow\R$, acting directly in the equation of $y_0$ while the other components of the solution, $y_1,...,y_n$, are indirectly actuated through the corresponding coupling terms containing  $y_0$.
\medskip

Consider a  stationary state  $\overline y=(\overline{y}_0,...,\overline{y}_n), \overline{y}_j\in L^\infty(\Omega), j\in\overline{0,n}$, solution to the elliptic system: 
\begin{equation}\label{nonlinelliptic}\left\lbrace
\begin{array}{ll}
-\Delta \overline y_0=\overline g_0(x)+f_0(x,\overline y_0), & x\in\Omega,\\
-\Delta \overline y_i=\overline g_i(x)+f_i(x,\overline y_0,\overline y_i),\, i\in\overline{1,n},\, &x\in\Omega,\\
\overline y_0=...=\overline y_{n}=0, &x\in\partial\Omega.\\
\end{array}
\right.
\end{equation}
Observe in fact that, by elliptic regularity, an $L^\infty$ stationary solution is a smooth solution. 
\medskip

Concerning the coupling terms we assume the following hypotheses:\medskip

\begin{enumerate}
	\item[\textit{(H1)}] $f_0:\R^N\times\R\longrightarrow\R$, $ f_i:\R^N\times\R\times\R\longrightarrow\R, i\in\overline{1,n} $ are $C^1$ functions and there exist $\omega_1,...\omega_{n}\subset \Omega$,  open nonempty subsets of
	$\Omega$ such that
	\begin{equation}
	(\omega_i\cap\omega_0)\setminus\bigcup_{j\ne0,i}\omega_j\neq\emptyset,\,\forall i\in\overline{1,n},
	\end{equation}
and for all $i\in\overline{1,n}$ we have
	\begin{equation}\label{fsuport}
	f_i(x,y_0,y_i)=0 \,\forall x\in\Omega\setminus\omega_i,\, y_0,y_i\in\R;
	\end{equation}
\item[\textit{(H2)}]The  following coupling condition holds:
\begin{equation}\label{suportderiv}
\text{supp }\frac{\partial f_i}{\partial y_0}(x,\overline{y}_0(x),\overline{y}_i(x))\cap\biggl\{(\omega_i\cap\omega_0)\setminus\overline{\bigcup_{j\ne0,i}\omega_j}\biggr\}\neq\emptyset.
\end{equation}
\end{enumerate}

\begin{remark}
Concerning the above technical hypotheses \eqref{fsuport}, \eqref{suportderiv}-(H2), observe that they are, for example, satisfied for all sources $\overline g_i$ and corresponding stationary solutions $\overline y$ if  the nonlinearities $f_i$ are of the form
$$
f_i(x,y_0,y_i)=\zeta_i(x)\xi_i(y_0,y_i),\,x\in\Omega,y_0,y_i\in\R,
$$
with $\emptyset\not=\mbox{supp }\zeta_i(x)\subset \subset(\omega_i\cap\omega_0)\setminus\overline{\bigcup_{j\ne0,i}\omega_j}$ and $\xi_i=\xi_i(y_0,y_i):\mathbb{R}\times\mathbb{R}\rightarrow\mathbb{R}$ are smooth with $\displaystyle\frac{\partial \xi_i}{\partial y_0}\ne0,\,\forall y_0,y_i\in\R$.

\noindent If $\overline y$ is a constant solution, as the problem has homogeneous boundary conditions, necessarily $\overline y\equiv 0$; a stationary solution in this case exists if and only if $\overline g_0(x)=-f_0(x,0)$ and $\overline g_i(x)=-f_i(x,0,0),\forall x\in\Omega$. Condition \eqref{suportderiv} is satisfied if, for example, $\displaystyle\emptyset\ne\text{supp }\frac{\partial f_i}{\partial y_0}(x,0,0)\subset\subset (\omega_i\cap\omega_0)\setminus\overline{\bigcup_{j\ne0,i}\omega_j}.$

\noindent 

In concrete situations, when the stationary solution is known, the hypotheses we imposed on the supports of the coupling functions   are easy to verify. 
\end{remark}

Our study concerns the controllability to the stationary state $\overline y$  of system \eqref{nonlinsystem}  in a given time interval $T$.  We are thus led to the study of a class of linear  controlled systems and corresponding controllability properties, systems which arise by a linearization procedure around the stationary state:
\begin{equation}\label{linsystem}
\left\lbrace
\begin{array}{ll}
D_tz_{0}-\Delta z_0=c_0(t,x)z_0+\chi_{\omega_0}u, & (0,T)\times\Omega,\\
D_tz_{i}-\Delta z_i=a_{i0}(t,x)z_0+c_i(t,x)z_i,\, i\in\overline{1,n},\, &(0,T)\times\Omega,\\
z_0=...=z_{n}=0, &(0,T)\times\partial\Omega,\\
\end{array}
\right.
\end{equation} 
For $M,\delta>0,$ and open subsets $\underline{\omega_i}\subset\subset(\omega_i\cap\omega_0)\setminus\bigcup_{j\ne0,i}\omega_j $ we introduce the following classes  of coefficients sets:
\begin{equation}\label{hyp}
\begin{aligned}
&\mathcal{E}_{M,\delta,\{\underline\omega_i\}_i}=\biggl\{ E=\{a_{i0},c_j\}_{i\in\overline{1,n},j\in\overline{0,n}}:a_{i0},c_j\in L^\infty(Q),  \\
&\|a_{i0}\|_{L^\infty},\|c_j\|_{L^\infty}\leq M, a_{i0}=0 \text{ in } Q\setminus Q_{\omega_i},\text{ and } |a_{i0}|\ge\delta \text{ on }Q_{\underline\omega_i}\biggr\}.
\end{aligned}
\end{equation}
We prove first that such linear systems with coefficients in $\mathcal{E}_{M,\delta,\{\underline\omega_i\}_i}$ are null controllable with norm $L^2$ and $L^\infty$ of the control uniformly bounded by a constant $C=C(M,\delta,\{\underline\omega_i\}_i)$.

In order to achieve this goal we consider the adjoint system:
\begin{equation}\label{adjlinsystem}
\left\lbrace
\begin{array}{ll}
-D_tp_{0}-\Delta p_0=c_0(t,x)p_0+\sum_{i=1}^{n}a_{i0}(t,x)p_i, & (0,T)\times\Omega,\\
-D_tp_{i}-\Delta p_i=c_i(t,x)p_i,\, i\in\overline{1,n},\, &(0,T)\times\Omega,\\
p_0=...=p_{n}=0, &(0,T)\times\partial\Omega.\\
\end{array}
\right.
\end{equation}
We prove an observability inequality as consequence of an  appropriate Carleman estimate.
The Carleman estimate we establish in the next section gives us more than just observability, it helps obtaining a priori estimates for the control driving the solution of the linear system to zero and, as the constants appearing in the Caleman estimates are depending only on $M,\delta,\{\underline\omega_i\}_i$, the estimates on the control will result uniform. 
This fact is essential in the fixed point argument when dealing with the nonlinear system.

In order to reformulate the problem in an abstract functional framework let the state space be  the Hilbert space $H=[L^2(\Omega)]^{n+1}$ and the control space $U=L^2( \omega_0)$.
Consider the operator 
$$\textbf{A}:D(\mathbf{A})\subset H\longrightarrow H, D(\textbf{A})=(H_0^1(\Omega)\cap
H^2(\Omega))^{n+1}, \textbf{A}z=\Delta z,$$
and the control operator  $$\textbf{B}:U\rightarrow H,\, \textbf{B}u=\chi_{\omega_0}Bu,\, B=(1,0,\ldots,0)^\top.$$
Then, problem \eqref{nonlinsystem} may be written in abstract form:
\begin{equation}\label{nonlinevpb}
\left\lbrace
\begin{array}{ll}
D_ty=\textbf{A}y+\textbf{f}(y) +\textbf{B}u, & t>0,\\
y(0)=y^0.   & \,\\
\end{array}
\right.
\end{equation} where $\textbf{f}(y)=f(\cdot,y(\cdot))$.
The linear   problem \eqref{linsystem} may be reformulated as: 
\begin{equation}\label{linevpb}
\left\lbrace
\begin{array}{ll}
D_tz=\textbf{A}z+\mathbf{A_0}(t)z+\mathbf{C}(t)z+\textbf{B}u, & t>0,\\
z(0)=z^0,   &\\
\end{array}
\right.
\end{equation} 
where $\textbf{C}(t)z=C_0(t,\cdot)z(\cdot)$ and $\mathbf{A_0}(t)z=A_0(t,\cdot)z(\cdot)$,  $ C_0(t,x)$ is the diagonal matrix $C_0(t,x)=diag(c_i(t,x))_{i=\overline{0,n}}$ and the coupling matrix  $A_0(t,x)$ has  only one  nonzero column,  the first one, and is given by
$$A_0(t,x)=(0,a_{10},\ldots,a_{n0})^\top\cdot (1,0,\ldots,0).$$
 
 For simplicity, when there is no confusion, we denote the norms of functions   $z\in[L^2(\Omega)]^{n+1}, $ $z\in [H^1(\Omega)]^{n+1}$  \textit{etc.} as $\|z\|_{L^2(\Omega)}$,  respectively $\|z\|_{H^1(\Omega)}$  \textit{etc.}. 
 
Null controllabity for the linear system  \eqref{linevpb} above is equivalent to an observability inequality 
\begin{equation}\label{obsineq0}
\|p(0)\|^2_{L^2(\Omega)}\leq C(M,\delta)\int_0^T
\|\textbf{B}^* p\|_{L^2(\omega_0)}^2dt,  \qquad \text{ for some } C(M,\delta)>0,
\end{equation}
for all solutions $p$ to the adjoint equation
\begin{equation}\label{adjlinevpb}
-p'=\textbf{A}p+\mathbf{A_0^*} p+\textbf{C}p
\end{equation} 
where $\mathbf{A_0^*} p=A_0^\top p, \textbf{B}^*p=B^\top p|_{\omega_0}$.
\medskip

We extend our study to parabolic systems with tree-like couplings. In fact we will treat only linear equations with appropriate hypotheses for the coupling coefficients in a tree-like structure. Passing from linear results of controllability to local controllability for nonlinear systems may be obtaiend by exactly the same procedure as in the star-like case. An example of linear parabolic system with tree-like couplings is the following:
\begin{equation}\label{linsystem_tree}
\left\lbrace
\begin{array}{ll}
D_tz_{0}-\Delta z_0=c_0(t,x)z_0+\chi_{\omega_0}u, & \text{in }(0,T)\times\Omega,\\
D_tz_{1}-\Delta z_1=a_{10}(t,x)z_0+c_1(t,x)z_1,\,  &\text{in }(0,T)\times\Omega,\\
D_tz_{2}-\Delta z_2=a_{20}(t,x)z_0+c_2(t,x)z_2,\,  &\text{in }(0,T)\times\Omega,\\
D_tz_{3}-\Delta z_3=a_{31}(t,x)z_1+c_3(t,x)z_3,\,  &\text{in }(0,T)\times\Omega,\\
D_tz_{4}-\Delta z_4=a_{41}(t,x)z_1+c_2(t,x)z_4,\,  &\text{in }(0,T)\times\Omega,\\
z_0=...=z_{4}=0, &\text{on }(0,T)\times\partial\Omega,\\
\end{array}
\right.
\end{equation} 
and the general form of system with tree like couplings will be discussed in \S \ref{sec-tree}.\medskip 

The paper is organized as follows:

\begin{itemize}
	\item In \S \ref{secobservcarl} we prove appropriate Carleman estimates for adjoint system \eqref{adjlinsystem} in either $L^2-L^2$ or $L^\infty-L^2$ settings. This will be Theorem \ref{th1obs}
	\item In \S \ref{seccontrtreelin} we prove the null controllability of linear system \eqref{linsystem}. The approach uses  a family of optimal control problems with penalized final cost. One then obtains besides controllability an estimate for the control in both $L^2$ and $L^\infty$ norms by using the previous Carleman estimates. This is Theorem \ref{thcontrol}.
		\item \S \ref{seccontrtreenonlin} is devoted to the local controllability in $L^\infty$ of nonlinear system \eqref{nonlinsystem}. The fact that controllability has to be proved in $L^\infty$ is due to the high sensitivity of the Carleman estimates with respect to the coupling coefficients, which is not the case when controls act in each equation of the system.  The argument is similar to that used in \cite{corgueros2010}.
		\item In \S\ref{sec-tree} we extend results of controllability, with one distributed scalar control, for linear systems of parabolic equations, of the form \eqref{linsystem_tree}, with tree-like couplings. The key point here is obtaining appropriate Carleman estimates. Local controllability for nonlinear systems with tree-like couplings is also discussed.  
		
\end{itemize}

\section{ Carleman estimates and observability}\label{secobservcarl}
In this section we establish an $L^2$ Carleman estimate that will help proving an observability inequality fot the adjoint problem \eqref{adjlinsystem}. This $L^2$ Carleman inequality and parabolic regularity  are the starting point in obtaining an $L^\infty$ control through a bootstrap argument.

We recall the classical Carleman estimate for a generic nonhomogeneous parabolic problem, 
\begin{equation}\label{genericparab}
\left\lbrace
\begin{array}{ll}
D_tp+Lp=h, & \text{ in } (0,T)\times\Omega,\\
p=0,   &\text{ on } (0,T)\times\partial\Omega,\\
\end{array}
\right.
\end{equation} 
where $L$ is an uniformly elliptic operator of second order.  Denote by $Q:=(0,T)\times\Omega$ and, for  $\omega\subset\subset \Omega$, $ Q_{\omega}:=(0,T)\times\omega$.
 The solution is observed in $Q_\omega$ for  sources $ h\in L^2(Q)$.

 We introduce the function
$$\psi\in C^2(\overline{\Omega}),\, \psi|_{\partial\Omega}=k>0,\, k<\psi<\frac32k \text{ in }\Omega, \{x\in\overline\Omega: |\nabla\psi(x)|=0\}\subset\subset\omega, $$
and the weight functions
\begin{equation}\label{fialfagen}
\varphi(t,x):=\frac{e^{\lambda\psi(x)}}{t(T-t)},\quad
\alpha(t,x):=\frac{e^{\lambda\psi(x)}-e^{1.5\lambda\|\psi\|_{C(\overline{\Omega})}}}{t(T-t)}.
\end{equation}
Then, the classical global Carleman estimate (see \cite{furima1996}, \cite{ferzua2000}) is the following:

\begin{lemma}\label{lemaCarleman} There exist $\lambda_0,s_{0}$ and $C>0$ such that if $\lambda>\lambda_0, s\geq s_0$, the following inequality holds:
	\begin{equation}\label{estCarleman}\begin{aligned}
	&\int_Q\left[(s\varphi)^{-1} (|D_tp|^2+|D^2p|)+s\lambda^{2}\varphi|D p|^2+s^{3}\lambda^{4}\varphi^{3}|p|^2\right]e^{2s\alpha}dxdt\\
	&\leq C\int_{Q_\omega}s^{3}\lambda^{4}\varphi^{3}|p|^2e^{2s\alpha}dxdt+\int_{Q}|h|^2 e^{2s\alpha}dxdt\\
	\end{aligned}
	\end{equation}
	for all $p\in H^1(0,T;L^2(\Omega))\cap L^2(0,T; H^2(\Omega))$ solution of \eqref{genericparab}.
\end{lemma}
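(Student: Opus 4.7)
The strategy is the classical Fursikov--Imanuvilov conjugation method. First I introduce the conjugated variable $q=e^{s\alpha}p$, so that $p=e^{-s\alpha}q$. Computing $D_tp+Lp$ in terms of $q$ produces an identity of the form $P_1q+P_2q=e^{s\alpha}h+Rq$, where $P_1$ collects the formally self-adjoint (symmetric) terms in $q$, $P_2$ collects the skew-symmetric ones with respect to the $L^2(Q)$ inner product, and $R$ gathers lower-order remainders. Squaring in $L^2(Q)$ yields $\|P_1q\|^2+\|P_2q\|^2+2(P_1q,P_2q)_{L^2(Q)}=\|e^{s\alpha}h+Rq\|^2$, so the whole game is to produce the positivity from the cross term.

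The heart of the proof is the computation of $(P_1q,P_2q)_{L^2(Q)}$ by repeated integration by parts, using the explicit structure $\nabla\alpha=\lambda\varphi\nabla\psi$, $D^2\alpha=\lambda^2\varphi\,\nabla\psi\otimes\nabla\psi+\lambda\varphi D^2\psi$ and $|\alpha_t|+|\alpha_{tt}|\lesssim T\varphi^{2}+T^{2}\varphi^{3}$. After carefully keeping track of powers of $s$ and $\lambda$ and discarding remainders of lower order, one isolates the dominant positive contribution
\begin{equation*}
c\int_Q\bigl(s^{3}\lambda^{4}\varphi^{3}|q|^{2}+s\lambda^{2}\varphi|\nabla q|^{2}\bigr)\,dx\,dt.
\end{equation*}
The boundary terms on $(0,T)\times\partial\Omega$ produced by the integrations by parts carry the factor $\partial_\nu\psi$, which is $\leq 0$ by the construction of $\psi$ (the weight is maximal on $\partial\Omega$), so they contribute with the favorable sign and can be dropped. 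The endpoint contributions at $t=0,T$ vanish thanks to the blow-up of $-\alpha$ there. At this stage the inequality holds, apart from a remaining obstacle: the positivity degenerates on the set $\{|\nabla\psi|=0\}\subset\subset\omega$, where the coefficient of $|\nabla q|^2$ disappears.

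To handle this I introduce a cutoff $\chi\in C_c^\infty(\omega)$ that equals $1$ on a neighborhood of $\{|\nabla\psi|=0\}$ and split the estimate into two zones. Outside $\mathrm{supp}\,\chi$, $|\nabla\psi|$ is bounded below, so the calculation above gives the full positivity. Inside $\omega$ one performs one extra integration by parts to trade the $|\nabla q|^2$ contribution for an $L^2$-observation of $q$, absorbing what is produced on $\mathrm{supp}\,\chi$ by
\begin{equation*}
C\int_{Q_\omega} s^{3}\lambda^{4}\varphi^{3}|q|^{2}\,dx\,dt.
\end{equation*}
For $\lambda\geq\lambda_0$ and $s\geq s_0$ large enough, all the lower-order terms coming from $R$ and from the commutator errors can be absorbed into the dominant gradient and zero-order positive contributions.

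To recover the $(s\varphi)^{-1}$-weighted bounds on $D_tp$ and $D^2p$ I use the equation itself: setting $\tilde p=(s\varphi)^{-1/2}e^{s\alpha}p$, the function $\tilde p$ solves a parabolic equation whose right-hand side is controlled, after the previous step, by the already-estimated quantities together with $e^{s\alpha}h$; standard $L^2$ parabolic regularity then furnishes the bound on $(s\varphi)^{-1}|D_tp|^2$ and $(s\varphi)^{-1}|D^2p|^2$. Undoing the conjugation $q=e^{s\alpha}p$ then yields \eqref{estCarleman}. The main technical obstacle is the bookkeeping in $(P_1q,P_2q)$ and verifying that every remainder is genuinely of lower order in $s$ and $\lambda$ than one of the two leading positive terms; this is lengthy but entirely standard, which is why the statement is recalled rather than reproved in the paper.
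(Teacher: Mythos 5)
Your proposal is the classical Fursikov--Imanuvilov argument (conjugation $q=e^{s\alpha}p$, splitting into symmetric and skew-symmetric parts, extraction of positivity from the cross term, a cutoff near the critical set $\{|\nabla\psi|=0\}\subset\subset\omega$, and recovery of the $(s\varphi)^{-1}$-weighted bounds on $D_tp$ and $D^2p$ from the equation), which is precisely the proof in \cite{furima1996,ferzua2000} that the paper invokes --- the paper itself recalls this lemma without reproving it. One small correction: with the normalization $\psi|_{\partial\Omega}=k$ and $k<\psi<\frac32 k$ in $\Omega$, the function $\psi$ attains its \emph{minimum} on $\partial\Omega$, not its maximum, and it is exactly this that yields $\partial_\nu\psi\le0$ and hence the favorable sign of the boundary terms you discard.
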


We establish a  Carleman estimate for the following nonhomogeneous version to the adjoint problem to  \eqref{linsystem}, with  source term  $g\in [L^2(Q)]^{n+1}$ and observation operator $\textbf{B}^*p=B^\top p|_{\omega_0}=p_0|_{\omega_0}$, operator which "sees" only $p_0$ in  the subdomain $ \omega_0$:
\begin{equation}\label{withsource}
\left\lbrace
\begin{array}{ll}
-D_tp_{0}-\Delta p_0=c_0(t,x)p_0+\sum_{i=1}^{n}a_{i0}(t,x)p_i+g_0, & (0,T)\times\Omega,\\
-D_tp_{i}-\Delta p_i=c_i(t,x)p_i+g_i,\, i\in\overline{1,n},\, &(0,T)\times\Omega,\\
p_0=...=p_{n}=0, &(0,T)\times\partial\Omega.\\
\end{array}
\right.
\end{equation}
In the following we are going to establish Carleman estimates for each equation in \eqref{withsource} by using in each case corresponding subdomains of observation and appropriately chosen weight functions. 

Some technical preliminaries are needed and we proceed as follows:

Consider open subsets 

$$\tilde\omega_j\subset\subset\underline\omega_j$$ 
and denote as above  by $Q_{\tilde\omega_j}=(0,T)\times\tilde\omega_j$; take the   auxiliary functions $\psi_j, j=\overline{0,n}$,  with the following properties (where we have denoted by $\tilde\omega_0:=\omega_0$):
\begin{equation}\label{psi}
\psi_j:=\eta_j+K_j, j\in \overline{0,n},
\end{equation}
$$\eta_j\in C^2(\overline{\Omega}),\, 0<\eta_j \text{ in }\Omega,\quad\eta_j|_{\partial\Omega}=0,\quad \{x\in\overline\Omega: |\nabla\eta_j(x)|=0\}\subset\subset\tilde\omega_j,$$
for some fixed positive constants $K_j>0$  such that
\begin{equation}\label{psiipsi0}
	\psi_i>\psi_0 \text{ in } \Omega, \forall i\in\overline{1,n},
\end{equation}

and
\begin{equation}\label{psipsi}
\frac{\sup \psi_j}{\inf\psi_j}<\frac87, \forall j\in\overline{0,n}.
\end{equation}
  Let $0<\epsilon<\inf\psi_i,  i\in\overline{0,n}$ a small positive number and denote by
 	\begin{equation}
 	\overline{\psi}=\sup_{x\in\Omega}\sup_{j\in \overline{0,n}}\psi_j(x)+\epsilon,
 	\qquad
 	\underline{\psi}=\inf_{x\in\Omega}\inf_{j\in \overline{0,n}}\psi_j(x)-\epsilon.
 	\end{equation}
 	Introduce also, for parameters $s,\lambda>0$ the auxiliary functions:
 	\begin{equation}\label{fialfa}
 	\varphi_j(t,x):=\frac{e^{\lambda\psi_j(x)}}{t(T-t)},\quad
 	\alpha_j(t,x):=\frac{e^{\lambda\psi_j(x)}-e^{1.5\lambda\overline{\psi}}}{t(T-t)}, \forall j\in\overline{0,n}
 	\end{equation}
and
 	\begin{equation}\label{fialfabar}
 	\overline\varphi(t)=\overline\varphi^\lambda(t):=\frac{e^{\lambda\overline\psi}}{t(T-t)},\quad
 	\overline\alpha(t)=\overline\alpha^\lambda(t):=\frac{e^{\lambda\overline\psi}-e^{1.5\lambda\overline\psi}}{t(T-t)},
 	\end{equation}
 	\begin{equation}\label{fialfabar2}
 	\underline\varphi(t)=\underline\varphi^\lambda(t):=\frac{e^{\lambda\underline\psi}}{t(T-t)},\quad
 	\underline\alpha(t)=\underline\alpha^\lambda(t):=\frac{e^{\lambda\underline\psi}-e^{1.5\lambda\overline\psi}}{t(T-t)}.
 	\end{equation}
 
 \begin{remark}\label{ordineaponderilor}
 	
 	\begin{enumerate}
 		\item[(i)] As we are going to compare the various Carleman estimates  stated for each equation of the linear adjoint system, we will need  to compare the weights which are involved in thgose inequalities. For this purpose let us observe  that given  $m_0>0$ there exist $s_0=s_0(m_0),\lambda_0=\lambda_0(m_0)>0$ such that for all $s>s_0, \lambda>\lambda_0$, $|m|\le m_0$ and $  t\in(0,T)$, the following inequality holds:
 		
 		\begin{equation}\label{weightsorder}
 		e^{s\underline\alpha}\leq s^m\varphi_i^me^{s\alpha_i}\leq e^{s\overline\alpha},
 		\end{equation}
 		
 		\begin{equation}\label{weightsorder1}
 		e^{s\alpha_0}\leq s^m\varphi_i^me^{s\alpha_i}.
 		\end{equation}
 		\item[(ii)] Observe that if in \eqref{psi} we replace $K_i$ with $K_i+M$ with  the constant $M>0$ big enough, the above properties of the auxiliary functions remain valid and, moreover, we may assume that 
 		\begin{equation}
 		\frac{\overline\psi}{\underline\psi}\leq\frac{3}{2}.
 		\end{equation} 
 		This extra assumption implies that there exist $\bar s_0>0,\bar\lambda_0>0$ such that if $s>\bar s_0$ $\lambda>\bar\lambda_0$,
 		\begin{equation}
 		|D_t\varphi_i|\leq C\varphi_i^2,\quad |D_t\alpha_i| \leq C\varphi_i^2,\quad |D^2_t\alpha_i| \leq C\varphi_i^3.
 		\end{equation}
 		\item[(iii)]Observe that for $\lambda$ big enough, say $\lambda>\overline\lambda$, we have 
 		\begin{equation}\label{ordineaponderilor2}
 		\frac{\;\underline{\alpha^\lambda}\;}{\overline{\alpha^\lambda}}<2.
 		\end{equation}
 		Indeed, this is a consequence to the fact that  $\displaystyle\lim_{\lambda\rightarrow+\infty}\frac{\;\underline{\alpha^\lambda}\;}{\overline{\alpha^\lambda}}=1$, uniformly with respect to $(t,x)\in Q$.
 \end{enumerate}\end{remark}\medskip

 	In this section we prove the following Carleman estimate which has as consequence the appropriate observability inequality for the adjoint system \eqref{genericparab}.
 
 \begin{theorem}\label{th1obs} There exist constants $\lambda_0,s_{0}$ such that for $\lambda>\lambda_0$ there exists a constant $C>0$ depending on $(M,\delta,\{\underline\omega_i\}_i,\lambda)$, such that, for any $ s\geq s_0$, the following inequality holds:
 	\begin{equation}\label{estCarlemanstar}\begin{aligned}
 	&\int_Q(|D_tp|^2+|D^2p|^2+|D p|^2+|p|^2)e^{2s\underline\alpha}dxdt\\
 	&\leq C\int_{Q_{\omega_0}}|p_0|^2e^{2s\overline\alpha}dxdt+C\int_{Q}|g|^2e^{2s\overline\alpha}dxdt\\
 	\end{aligned}
 	\end{equation}
 	for all $p\in H^1(0,T;L^2(\Omega))\cap L^2(0,T; H^2(\Omega))$ solution of \eqref{withsource}.

 	Moreover, there exist $m_0\in\mathbb{N}$ and $\delta_1>0$  such that for the homogeneous adjoint system (\textit{i.e.} taking $g\equiv 0$), we have the following $L^\infty-L^2$ Carleman estimate
 	\begin{equation}\label{LinftyL2Carleman}
 	\|p e^{(s+m_0\delta_1)\underline{\alpha}}\|_{L^{\infty}(Q)}\leq C \|p_0 e^{s\overline\alpha}\|_{L^2(Q_{\omega_0})}.
 	\end{equation}
 	\end{theorem}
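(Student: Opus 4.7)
The plan is to apply the classical Carleman estimate of Lemma \ref{lemaCarleman} separately to each equation of system \eqref{withsource}, with its own auxiliary function $\psi_j$ and local observation region $\tilde\omega_j$, and then to couple the resulting estimates through the algebraic identity $a_{i0}p_i = -D_t p_0 - \Delta p_0 - c_0 p_0 - g_0$ which holds on $\underline\omega_i$ by hypothesis \textit{(H1)}. After summing, the zero-order perturbations $c_j p_j$ are absorbed into the $s^3\lambda^4\varphi^3|\cdot|^2 e^{2s\alpha}$ terms on the LHS for $s$ large; the cross-terms $\int_Q|a_{i0}p_i|^2 e^{2s\alpha_0}$ produced on the RHS of the $p_0$ Carleman are, via \eqref{weightsorder1} with $m=3/2$, bounded by $M^2 \int_Q s^3\varphi_i^3|p_i|^2 e^{2s\alpha_i}$ and absorbed into the LHS of the $p_i$ Carleman provided $\lambda$ is large enough so that $\lambda^4$ beats $M^2$. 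The only remaining local terms are then the target observation $\int_{Q_{\omega_0}}s^3\lambda^4\varphi_0^3|p_0|^2 e^{2s\alpha_0}$ and, for $i\in\overline{1,n}$, the undesired contributions $\int_{Q_{\tilde\omega_i}}s^3\lambda^4\varphi_i^3|p_i|^2 e^{2s\alpha_i}$.

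The main step, and the principal obstacle, is to eliminate these $p_i$-observations using only the observation of $p_0$ on $\omega_0$. I would pick a cut-off $\theta_i \in C_c^\infty(\underline\omega_i)$ with $\theta_i \equiv 1$ on $\tilde\omega_i$. By hypothesis \textit{(H1)}, the supports of $a_{j0}$ for $j\ne 0,i$ are disjoint from $\mathrm{supp}\,\theta_i$, so on this set the $p_0$ equation reduces to $a_{i0}p_i = -D_t p_0 - \Delta p_0 - c_0 p_0 - g_0$. Setting $F := s^3\lambda^4\varphi_i^3 e^{2s\alpha_i}$, I multiply this identity by $\theta_i^2 F p_i$, integrate over $Q$, and integrate by parts in $t$ and in $x$ so that the derivatives of $p_0$ are transferred either to the smooth factor $\theta_i^2 F$ or to $p_i$. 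Whenever a $D_t p_i$ appears after the IBP in time, it is replaced via the $p_i$ equation by $-\Delta p_i - c_i p_i - g_i$, so the final integrand contains only $p_j,\nabla p_j,\Delta p_j$ with $j \in \{0,i\}$ and the smooth weight and its derivatives. Young's inequality then distributes the resulting bilinear terms among (a) absorption into the combined LHS of the $p_0$ and $p_i$ Carleman estimates, (b) the source $\int_Q|g|^2 e^{2s\overline\alpha}$, and (c) the target observation $\int_{Q_{\omega_0}}|p_0|^2 e^{2s\overline\alpha}$ (with the weight comparisons in Remark \ref{ordineaponderilor} used throughout to homogenize). Since $|a_{i0}|\geq\delta$ on $\mathrm{supp}\,\theta_i$, this yields a bound for $\int_{Q_{\tilde\omega_i}} F|p_i|^2$ up to a factor $\delta^2$. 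The real difficulty is that $a_{i0}\in L^\infty(Q)$ has no classical derivative: the test function and the order of the IBP must be arranged so that no derivative of $a_{i0}$ is ever required, and so that the resulting $p_0$-derivative terms come with exactly the weight needed to be absorbed by the LHS of the combined Carleman, despite the unfavourable ordering $\alpha_i > \alpha_0$ near $\tilde\omega_i$.

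Once \eqref{estCarlemanstar} is established, for the $L^\infty$--$L^2$ inequality \eqref{LinftyL2Carleman} in the homogeneous case ($g \equiv 0$), I would bootstrap parabolic regularity on the weighted solution. Set $q_k := p_k e^{(s+m\delta_1)\underline\alpha}$ for an integer $m$ to be incremented: then $q_k$ solves a parabolic system whose source is a linear combination of $p_j$ and $\nabla p_j$ multiplied by derivatives of $\underline\alpha$ (controlled, by Remark \ref{ordineaponderilor}(ii), by $\underline\varphi^2$ and $\lambda\underline\varphi$). By the already proven \eqref{estCarlemanstar} this source lies in $L^2(Q)$, with norm bounded by $C\|p_0 e^{s\overline\alpha}\|_{L^2(Q_{\omega_0})}$. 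Parabolic maximal regularity on $W^{2,1}_2(Q)$ composed with Sobolev embedding improves the integrability exponent of $q_k$, and iterating this step finitely many times $m_0$, each time increasing $m$ by one to absorb the new weight powers that appear (the tuning parameter $\delta_1$ providing exactly the margin required), eventually reaches $L^\infty(Q)$. This gives \eqref{LinftyL2Carleman} with $m_0\in\mathbb{N}$ equal to the number of bootstrap iterations needed.
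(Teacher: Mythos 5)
Your proposal is correct and follows essentially the same route as the paper: one classical Carleman estimate per equation with ordered auxiliary functions $\psi_i>\psi_0$, recovery of the local $p_i$-observation terms by testing the $p_0$-equation localized on $\underline\omega_i$ (where only $a_{i0}p_i$ survives) against a cutoff times $s^3\varphi_i^3 p_i e^{2s\alpha_i}$ so that $a_{i0}$ is never differentiated and $|a_{i0}|\ge\delta$ gives the lower bound, homogenization of the weights via Remark \ref{ordineaponderilor}, and the same $q^j=p\,e^{(s+j\delta_1)\underline\alpha}$ maximal-regularity bootstrap for the $L^\infty$--$L^2$ estimate. The only departures are cosmetic: the paper keeps $D_tp_i$ after the integration by parts and absorbs $(s\varphi_i)^{-2}|D_tp_i|^2$ into the left-hand side rather than substituting from the $p_i$-equation, uses a signed cutoff $\gamma_i=\mathrm{sign}(a_{i0})$ instead of a nonnegative $\theta_i^2$, and absorbs the coupling cross-terms by taking $s$ (rather than $\lambda$) large.
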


\begin{proof}

The second remark above is useful when obtaining Carleman estimates, since the weights here are slightly different with respect to those  used in    \cite{furima1996} or \cite{fercarzua2000}. However, this remark allows following the same lines of proof and  we may  write Carleman estimate \eqref{estCarleman} 
for  each equation $j\in\overline{0,n}$ with observation domain $\tilde\omega_j$ and auxiliary functions and weight functions $\psi_j,\varphi_j,\alpha_j$. Thus, there exist $s_{0}>0, C>0$ such that for any $ s\geq s_0$, the following inequalities hold:
\begin{enumerate}
	\item For $p_0$ we have

	\begin{equation}\label{estCarleman0}\begin{aligned}
	&\int_{Q}\left[(s\varphi_0)^{-1}(|D_tp_0|^2+|D^2p_0|^2)+s\varphi_0|Dp_0|^2+s^{3}\varphi_0^{3}|p_0|^2\right]e^{2s\alpha_0}dxdt\\
	&\leq C\left[\int_{Q_{\omega_0}}s^{3}\varphi_0^{3}|p_0|^2e^{2s\alpha_0}dxdt+\int_{Q}\left|\sum_{i=1}^n a_{i0}p_i+g_0\right|^2 e^{2s\alpha_0}dxdt\right]\le\\
	&\leq C\left[\int_{Q_{\omega_0}}s^{3}\varphi_0^{3}|p_0|^2e^{2s\alpha_0}dxdt\right.\\
	&\left.+n^2M^2\sum_{i=1}^n \int_{Q}\left|p_i\right|^2 e^{2s\alpha_i}dxdt+\int_{Q}|g_0|^2e^{2s\alpha_i}dxdt\right].
	\end{aligned}
	\end{equation}
\item For $p_i,i\in\overline{1,n}$ we have:
\begin{equation}\label{estCarlemani}\begin{aligned}
&\int_{Q}\left[(s\varphi_i)^{-1}(|D_tp_i|^2+|D^2p_i|^2)+s\varphi_i|Dp_i|^2+s^{3}\varphi_i^{3}|p_i|^2\right]e^{2s\alpha_i}dxdt\\
&\leq C\int_{Q_{\tilde\omega_i}}s^{3}\varphi_i^{3}|p_i|^2e^{2s\alpha_i}dxdt+ C\int_{Q}|g_i|^2e^{2s\alpha_i}dxdt.\\
\end{aligned}
\end{equation}

\end{enumerate}

Summing the above Carleman inequalities we obtain, for some constant $C=C(M,\{\omega_j\}_j)>0$, that 
\begin{equation}
\label{estCarleman1}\begin{aligned}
&\sum_{j=0}^{n}\left\lbrace\int_{Q}\left[(s\varphi_j)^{-1}(|D_tp_j|^2+|D^2p_j|^2)+s\varphi_j|D p_j|^2+s^{3}\varphi_j^{3}|p_j|^2\right]e^{2s\alpha_j}dxdt\right\rbrace\\
&\leq C\left[\int_{Q_{\omega_0}}s^{3}\varphi_0^{3}|p_0|^2e^{2s\alpha_0}dxdt+\sum_{i=1}^{n}\left(\int_{Q_{\tilde\omega_i}}s^{3}\varphi_i^{3}|p_i|^2e^{2s\alpha_i}dxdt\right)\right.\\
&\left.
+\sum_{j=0}^{n}\left(\int_{Q_{\tilde\omega_j}}|g_j|^2e^{2s\alpha_j}dxdt\right)\right].\\
\end{aligned}
\end{equation}

At this point we have to properly estimate the terms containing $p_i$ on $\tilde\omega_i,i\in\overline{1,n}$ from the right hand-side in terms of the  component $p_0$ observed on $\tilde\omega_0$.
For this purpose we will use the first equation of \eqref{adjlinsystem} considered on $\omega_i\cap\omega_0$, which by hypothesis \eqref{hyp}  is coupled only to $p_i$:
\begin{equation}
\label{p0pi}
D_tp_0+\Delta p_0+c_0p_0+a_{i0}p_i=g_0\,\text{ in }(0,T)\times\omega_i\cap\omega_0.
\end{equation}
Consider the  cutoff functions $\gamma_i,i\in\overline{1,n}$ with the properties
\begin{equation*}
\begin{aligned}
&\gamma_i\in C_0^\infty(\omega_i),\,|\gamma_i|\le 1, \text{supp }\gamma_i=\overline{\underline\omega_i} \\
&\gamma_i=\text{ sign }(a_{i_0}|_{\underline\omega_i})\text{ on } \tilde\omega_i,\gamma_i\ne0 \text{ in }\underline\omega_i.
%& |a_{i0}|\ge\gamma_i \cdot a_{i_0}\ge 0 \text{ in }\overline \omega_i,
\end{aligned}
\end{equation*}
where $\text{ sign }(a_{i_0})$ is the sign of $a_{i0}$ in $\underline\omega_i$, which, by hypothesis \eqref{hyp} and continuity is nonzero and constant in $\tilde\omega_i $.
Multiply, scalarly in $L^2(Q_{\omega_0})$, the equation \eqref{p0pi}  by $\gamma_i  s^3\varphi_i^3p_ie^{2s\alpha_i}$:
\begin{equation}\label{ec0multipl}\begin{aligned}
&\int_{Q_{\underline\omega_i}}\gamma_ia_{i0}(x)s^3\varphi_i^3|p_i|^2e^{2s\alpha_i}dxdt\\
&=\int_{Q_{\underline\omega_i}}\gamma_is^3\varphi_i^3(-c_0p_0-D_tp_0-\Delta p_0-g_0)p_ie^{2s\alpha_i}dxdt
\end{aligned}	
	\end{equation}
	We use \eqref{hyp} to say that that there exists a constant such that 
	\begin{equation}\label{ms}
	\begin{aligned}
			\delta	\int_{Q_{\tilde\omega_i}}s^3\varphi_i^3|p_i|^2e^{2s\alpha_i}dxdt&\le \int_{Q_{\tilde\omega_i}}|a_{i0}(x)|s^3\varphi_i^3|p_i|^2e^{2s\alpha_i}dxdt\\
			&\leq \int_{Q_{\underline\omega_i}}a_{i0}(x)s^3\varphi_i^3|p_i|^2e^{2s\alpha_i}dxdt.
\end{aligned}	\end{equation}
	We estimate each term from the right hand-side of \eqref{ec0multipl} using the properties of $\gamma_j, j\in\overline{0,n}$. 
	Let $C>0$ denoting various constants depending on $\delta,M$ and $\underline\omega_i,\tilde\omega_i$. 
	
	For the first term in right side of \eqref{ec0multipl} we have:
	\begin{equation}\label{md1}\begin{aligned}
	&\left| \int_{Q_{\underline\omega_i}}\gamma_is^3\varphi_j^3
	(-c_0p_0)p_ie^{2s\alpha_i}dxdt\right|\\
	&\leq M\left(\int_{Q_{\underline\omega_i}}s^2\varphi_i^2|p_i|^2e^{2s\alpha_i}dxdt\right)^{\frac{1}{2}}\left(\int_{Q_{\underline\omega_i}}s^4\varphi_i^4|p_0|^2e^{2s\alpha_i}dxdt\right)^{\frac{1}{2}}\\
		&\leq
\int_{Q_{\underline\omega_i}}s^2\varphi_i^2|p_i|^2e^{2s\alpha_i}dxdt+M^2\int_{Q_{\underline\omega_i}}s^4\varphi_i^4|p_0|^2e^{2s\alpha_i}dxdt.\end{aligned}
	\end{equation}
	The same computation gives an estimate for the term involving the source:
	\begin{equation}\label{md11}\begin{aligned}
	&\left| \int_{Q_{\underline\omega_i}}\gamma_is^3\varphi_j^3
	(-g_0)p_ie^{2s\alpha_i}dxdt\right|\\
		&\leq
\int_{Q_{\underline\omega_i}}s^2\varphi_i^2|p_i|^2e^{2s\alpha_i}dxdt+M^2\int_{Q_{\underline\omega_i}}s^4\varphi_i^4|g_0|^2e^{2s\alpha_i}dxdt.\end{aligned}
	\end{equation}
	
	Observe now that we have the following estimates for the weight functions, with a constant $cst$ not depending on $s$:
	\begin{equation}
|\gamma_is^3D_t(e^{2s\alpha_i}\varphi_i^3)|=|\gamma_is^3(e^{2s\alpha_i}2sD_t\alpha_{i}\varphi_i^3+3e^{2s\alpha_i}\varphi^2D_t\varphi_i)|\leq
cst\, e^{2s\alpha_i}s^5\varphi_i^5
	\end{equation}
	and
	\begin{equation}
	|s^3\Delta(\gamma_i\varphi_i^3 p_ie^{2s\alpha_i})|\leq cst\,
	s^3\varphi_i^3(s^2\varphi_i^2|p_i|+s\varphi_i|\nabla p_i|+|\Delta
	p_i|)e^{2s\alpha_i}.
	\end{equation}
	
	We now proceed with estimating the second term in \eqref{ec0multipl} using, as usually in Carleman estimates,
 integration by parts:
	\begin{eqnarray}\left|\int_{Q_{\underline\omega_i}}\gamma_i s^3\varphi_i^3(-D_tp_0)p_ie^{2s\alpha_i}dxdt
	 \right|=\left|\int_{Q_{\underline\omega_i}}
	 s^3D_t(\varphi_i^3p_ie^{2s\alpha_i})p_0dxdt \right|\nonumber\\
	\leq\left|\int_{Q_{\underline\omega_i}}s^3D_t(\varphi_i^3e^{2s\alpha_i})p_ip_0dxdt\right|+\left|\int_{Q_{\underline\omega_i}}s^3\varphi_i^3e^{2s\alpha_i}D_tp_ip_0dxdt\right|\nonumber\\
	\leq\label{md2}
	 C\left|\int_{Q_{\underline\omega_i}}e^{2s\alpha_i}s^5\varphi_i^5p_jp_0dxdt\right|+\left|\int_{Q_{\underline\omega_i}}e^{2s\alpha_i}s^3\varphi_i^3D_tp_ip_0dxdt\right|\nonumber\\
	\leq\int_{Q_{\underline\omega_i}}s^2\varphi_i^2|p_i|^2e^{2s\alpha_i}dxdt+C\int_{Q_{\underline\omega_i}}s^8\varphi_i^8|p_0|^2e^{2s\alpha_i}dxdt
	\\
	 +\int_{Q_{\underline\omega_i}}(s\varphi)^{-2}|D_tp_i|^2e^{2s\alpha_i}dxdt+C\int_{Q_{\underline\omega_i}}s^8\varphi_i^8|p_0|^2e^{2s\alpha_i}dxdt.\nonumber
	\end{eqnarray}
	We proceed now with estimating the third term in right hand side of \eqref{ec0multipl}:
	\begin{equation}\label{md3}
	 \begin{aligned}
	 &\left|\int_{Q_{\underline\omega_i}}\gamma_is^3\varphi_i^3(-\Delta
	 p_0)p_ie^{2s\alpha_i}dxdt\right|=\left|
	 \int_{Q_{\underline\omega_i}}s^3\Delta(\gamma_i\varphi_i^3 p_i
	 e^{2s\alpha_i})p_0dxdt\right|\\
	&\leq
	 C\int_{Q_{\underline\omega_i}}s^3\varphi_i^3(s^2\varphi_i^2|p_i|+s\varphi_i|\nabla
	 p_i|+|\Delta p_i|)e^{2s\alpha_i}|p_0|dxdt\\  
	 &\leq
	 \int_{Q_{\underline\omega_i}}[s^2\varphi_i^2|p_i|^2+|\nabla
	 p_i|^2+(s\varphi_i)^{-2}|\Delta p_i|^2]e^{2s\alpha_i}dxdt\\
	 &+C\int_{Q_{\underline\omega_i}}s^8\varphi_i^8
	 |p_0|^2e^{2s\alpha_i}dxdt.
	 \end{aligned}
\end{equation}
	
Using \eqref{md1},\eqref{md11},\eqref{md2}, \eqref{md3} and \eqref{ms} we have, for $i\in\overline{1,n}$ that
\begin{equation}\begin{aligned}
	&\int_{Q_{\tilde\omega_i}}s^3\varphi_i^3|p_i|^2e^{2s\alpha_i}dxdt\le C \int_{Q_{\underline\omega_i}}s^8\varphi_i^8
	 |p_0|^2e^{2s\alpha_i}dxdt\\
	 &+\int_{Q_{\underline\omega_i}}\left[(s\varphi_i)^{-2}(|\Delta p_i|^2+|D_tp_i|^2)+s^2\varphi_i^2|p_i|+|\nabla
	 p_i|^2\right]e^{2s\alpha_i}dxdt\\
	 &+C\sum_{i=1}^n\int_{Q_{\underline\omega_i}}s^4\varphi_0^4
			|g_0|^2e^{2s\alpha_i}dxdt.\\
\end{aligned}\end{equation}

Going back to \eqref{estCarleman1}, we have

\begin{equation}
\begin{aligned}
&\sum_{j=0}^{n}\left\lbrace\int_{Q}\left[(s\varphi_j)^{-1}(|D_tp_j|^2+|D^2p_j|^2)+s\varphi_j|D p_j|^2+s^{3}\varphi_j^{3}|p_j|^2\right]e^{2s\alpha_j}dxdt\right\rbrace\\
&\leq C\int_{Q_{\omega_0}}s^3\varphi_0^3
|p_0|^2e^{2s\alpha_0}dxdt+C\sum_{i=1}^{n}\left(\int_{Q_{\underline\omega_i}}s^8\varphi_i^8
	 |p_0|^2e^{2s\alpha_i}dxdt\right.\\
	 &
\left.+\int_{Q_{\underline\omega_i}}\left[(s\varphi_i)^{-2}(|D^2 p_i|^2+|D_tp_i|^2)+s^2\varphi_i^2|p_i|^2+|D
	 p_i|^2\right]e^{2s\alpha_i}dxdt\right)\\
	 &+C\sum_{i=1}^n\int_{Q_{\underline\omega_i}}s^4\varphi_0^4
			|g_0|^2e^{2s\alpha_i}dxdt+ C\sum_{j=0}^{n}\int_{Q}
			|g_j|^2e^{2s\alpha_j}dxdt.\\
\end{aligned}
\end{equation}
	 We now absorb the integral terms containing $p_i$ in the right hand side into the corresponding higher order terms in the left side of the above inequality, by increasing $s$ and taking it big enough. We obtain: 
	\begin{equation}
		\label{estCarleman2}\begin{aligned}
			&\sum_{j=0}^{n}\left\lbrace\int_{Q}\left[(s\varphi_j)^{-1}(|D_tp_j|^2+|D^2p_j|^2)+s\varphi_j|D p_j|^2+s^{3}\varphi_j^{3}|p_j|^2\right]e^{2s\alpha_j}dxdt\right\rbrace\\
			&\leq C\int_{Q_{\omega_0}}s^3\varphi_0^3
			|p_0|^2e^{2s\alpha_0}dxdt+ C\sum_{i=1}^{n}\int_{Q_{\underline\omega_i}}s^8\varphi_i^8
			|p_0|^2e^{2s\alpha_i}dxdt.\\
			&+C\sum_{i=1}^n\int_{Q_{\underline\omega_i}}s^4\varphi_0^4
			|g_0|^2e^{2s\alpha_i}dxdt+ C\sum_{j=0}^{n}\int_{Q}
			|g_j|^2e^{2s\alpha_j}dxdt.\\
		\end{aligned}
	\end{equation} 
	Now  we use Remark \ref{ordineaponderilor} in order to take a smaller weight in the left side and a greater one in the right side. Then  there exist $s_0>0$ and $C=C(M,\delta,\{\underline\omega_j\}_j)$ such that    the following Carleman estimate is true for all $s\ge s_0$: 
	\begin{equation}
		\label{estCarleman3}\begin{aligned}
		&\sum_{j=0}^{n}\left[\int_{Q}\left(|D_tp_j|^2+|D^2p_j|^2+|D p_j|^2+|p_j|^2\right)e^{2s\underline\alpha}dxdt\right]\\
		&\leq C\int_{Q_{\omega_0}}|p_0|^2e^{2s\overline\alpha}dxdt+C\int_{Q}|g|^2e^{2s\overline\alpha}dxdt.\\
		\end{aligned}
	\end{equation}
	
\end{proof}

Concerning the $L^\infty-L^2$ Carleman estimate for the solution of the adjoint problem \eqref{adjlinsystem} we proceed in the same way as is in \cite{bar12002,corgueros2010} or \cite{balch}. 
We need to use the maximal regularity result in $L^p$ spaces for parabaolic problems (see \cite{lady}) and   Sobolev embeddings for anisotropic Sobolev spaces which are contained in the following lemma:

\begin{lemma}[\cite{lady}, Lemma 3.3]\label{lemmaLady1}
	Let $z\in W^{2,1}_r(Q)$.
	
	Then $z\in Z_1$ where 
	$$
	Z_1= \left\{\begin{array}{lll}
	L^s(Q)&\text{ with }s\le \frac{(N+2)r}{N+2-2r},&\text{ when }r<\frac{N+2}{2},\\
	L^s(Q)&\text{ with }s\in[1,\infty),&\text{ when }r=\frac{N+2}{2},\\
	C^{\alpha,\alpha/2}(Q)& \text{ with } 0<\alpha <2-\frac{N+2}{r},&\text{ when }r>\frac{N+2}{2},
	\end{array}\right.
	$$	
	and there exists $C=C(Q,p,N)$ such that $$
	\|z\|_{Z_1}\le C\|z\|_{W^{2,1}_r(Q)}.
	$$
\end{lemma}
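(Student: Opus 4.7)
The plan is to view $W^{2,1}_r(Q)$ as a second-order Sobolev space on the metric measure space $(\mathbb{R}^{N+1},\rho,dx\,dt)$, where $\rho((x,t),(y,s))=\max(|x-y|,|t-s|^{1/2})$ is the parabolic distance and the Lebesgue measure has homogeneous dimension $N+2$; the three cases of the lemma then mirror the classical Sobolev trichotomy (Hardy–Littlewood–Sobolev, critical/BMO, Morrey).

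First I would reduce the statement to the whole space $\mathbb{R}^{N+1}$. Since $\partial\Omega\in C^2$, local straightening of $\partial\Omega$ combined with a Lions-type reflection in the normal direction, plus even reflections across $t=0$ and $t=T$ and multiplication by a smooth cutoff, yields a compactly supported extension $\tilde z\in W^{2,1}_r(\mathbb{R}^{N+1})$ with $\|\tilde z\|_{W^{2,1}_r(\mathbb{R}^{N+1})}\le C\|z\|_{W^{2,1}_r(Q)}$. Setting $f:=(\partial_t-\Delta+1)\tilde z\in L^r(\mathbb{R}^{N+1})$ produces the convolution representation $\tilde z=K*f$, where
$$
K(x,t)=(4\pi t)^{-N/2}e^{-t}e^{-|x|^2/(4t)}\mathbf{1}_{t>0}
$$
is the fundamental solution of $\partial_t-\Delta+1$. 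The key pointwise estimate, in bounded regions of $\mathbb{R}^{N+1}$, is $|K(x,t)|\le C\rho((x,t),0)^{-N}$, so $K$ acts as a parabolic Riesz potential of order $2$ in a space of homogeneous dimension $N+2$.

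In the subcritical case $r<(N+2)/2$, the parabolic Hardy–Littlewood–Sobolev inequality — obtained from a Calder\'on–Zygmund decomposition in $(\mathbb{R}^{N+1},\rho)$ and a weak-type $(1,\tfrac{N+2}{N})$ bound for $K$ — yields $K*f\in L^s$ with $\tfrac{1}{s}=\tfrac{1}{r}-\tfrac{2}{N+2}$, i.e.\ $s=\tfrac{(N+2)r}{N+2-2r}$. At the critical exponent $r=(N+2)/2$ the same potential maps $L^r$ into parabolic BMO, and a truncation/Marcinkiewicz interpolation argument then gives $\tilde z\in L^s$ for every finite $s$. For $r>(N+2)/2$ I would estimate
$$
|\tilde z(P_1)-\tilde z(P_2)|\le\int|K(P_1-Q)-K(P_2-Q)|\,|f(Q)|\,dQ
$$
by splitting the integration over the parabolic ball of radius $2\rho(P_1,P_2)$ around $P_1$ and its complement, using pointwise gradient and $\partial_t$ bounds on $K$ together with H\"older's inequality; this produces $|\tilde z(P_1)-\tilde z(P_2)|\le C\|f\|_{L^r}\rho(P_1,P_2)^{2-(N+2)/r}$, which in $(x,t)$-coordinates is exactly the announced $C^{\alpha,\alpha/2}$ estimate with $\alpha<2-(N+2)/r$.

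The main obstacle is the parabolic Hardy–Littlewood–Sobolev inequality: one must verify that $(\mathbb{R}^{N+1},\rho,dx\,dt)$ is a space of homogeneous type of dimension exactly $N+2$ and then run the Calder\'on–Zygmund machinery in that non-isotropic setting. The critical case $r=(N+2)/2$ is additionally delicate, since $L^\infty$ is just out of reach and one must settle for \emph{all finite} $s$ via either an Orlicz intermediate or a truncation argument. As the result is classical, one may alternatively invoke \cite{lady} directly for a complete proof via parabolic potentials.
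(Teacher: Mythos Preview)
The paper does not prove this lemma at all: it is stated with the explicit attribution ``[\cite{lady}, Lemma 3.3]'' and is simply quoted as a known embedding result from Lady\v{z}enskaja--Solonnikov--Ural'ceva, to be used as a black box in the bootstrap argument leading to the $L^\infty$--$L^2$ Carleman estimate. There is therefore no ``paper's own proof'' to compare against.

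Your sketch is nonetheless a correct and standard route to the result. Viewing $W^{2,1}_r$ through the parabolic metric $\rho$ with homogeneous dimension $N+2$, extending to $\mathbb{R}^{N+1}$, and writing $\tilde z=K*f$ with $f=(\partial_t-\Delta+1)\tilde z$ does reduce the three cases to the parabolic Hardy--Littlewood--Sobolev inequality, the critical BMO endpoint, and a parabolic Morrey estimate, respectively. The kernel bound $|K(x,t)|\lesssim \rho((x,t),0)^{-N}$ is correct, and the subsequent potential-theoretic arguments go through because $(\mathbb{R}^{N+1},\rho,dx\,dt)$ is indeed a space of homogeneous type of dimension $N+2$. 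This is essentially the approach taken in \cite{lady} itself (Chapter~II, \S3), where the embeddings are derived from explicit estimates on parabolic potentials; so your proposal and the cited source are aligned in spirit, even though the book carries out the potential estimates by hand rather than invoking the abstract Calder\'on--Zygmund framework on spaces of homogeneous type.

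In short: nothing is wrong with your argument, but for the purposes of this paper a one-line citation suffices, and that is exactly what the authors do.
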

 Using the above regularity result we consider the following sequence of numbers: 
\begin{equation}\label{seqsigmaj}
\sigma_0=2,\quad \sigma_j:=
\begin{cases}
\dfrac{(N+2)\sigma_{j-1}}{N+2-2\sigma_{j-1}}, \text{  if } \sigma_{j-1}<\frac{N+2}{2},\\
\frac{3}{2}\sigma_{j-1}, \text{  if } \sigma_{j-1}\geq \frac{N+2}{2},
\end{cases}
\end{equation}
such that by Lemma \ref{lemmaLady1} we have 
$$W^{2,1}_{\sigma_{m-1}}(Q)\subset L^{\sigma_m}(Q).$$
Now, let us fix a $\delta_1>0$ and  a sequence $(q^j)_{j>0}$ defined by
\begin{equation*}
q^j:=p^\varepsilon e^{(s+j\delta_1)\underline{\alpha}}.
\end{equation*} 
Then $q^j=(q^j_0,\ldots,q^j_n)^\top$ is  solution to the problem
\begin{equation}\begin{aligned}
&D_tq^j+\textbf{A}q^j+C q^j+A_0^\top q^j=(s+j\delta_1)D_t\underline\alpha  q^j,\\
&q^j(T)=0.\\
\end{aligned}\end{equation}

%\begin{equation}\label{sysqj}
%\left\{\begin{aligned}
%&-D_tq_0^j-\Delta q_0^j-(s+j\delta_1)D_t\overline\alpha q_0^j=c_0q_0^j+\sum_{i=1}^na_{i0}q_0^j,\text{ in }(0,T)\times\Omega\\
%&-D_tq_i^j-\Delta q_i^j-(s+j\delta_1)D_t\overline\alpha q_0^i=c_iq_i^j, \text{ in }(0,T)\times\Omega\\
%&q_0^j=...=q_n^j=0,\text{ on } (0,T)\times\partial\Omega,\\
%&q^j_0(0,\cdot)=...=q^j_i(0,\cdot)=0 \text{ in }\Omega
%\end{aligned}\right.\quad
%j=\overline{1,m}.
%\end{equation} 

Observe that   the right-hand side may be bounded in terms of $q^{j-1}$, with some constant $C_j=C_j(s,\delta_1)>0$, as follows 
\begin{equation}
(s+j\delta_1)D_t\underline\alpha  q^j=(s+j\delta_1)\frac{2t-T}{t(T-t)}\underline{\alpha}e^{\delta_1\underline{\alpha}}q^{j-1}\leq C_jq^{j-1}.
\end{equation}
By  maximal parabolic regularity (see \cite{lady}) we have 
\begin{equation}\label{parabregj}
\|q^j\|_{W^{2,1}_{\sigma_{j-1}}}\leq \tilde C_j\|q^{j-1}\|_{L^{\sigma_{j-1}}}
\end{equation}
and using Sobolev type embedding from  Lemma \ref{lemmaLady1}, we have that there exists a constant $K_j$ such that
\begin{equation}\label{soboj}
\|q^{j-1}\|_{L^{\sigma_{j-1}}}\leq K_j \|q^{j-1}\|_{W^{2,1}_{\sigma_{j-2}}}.
\end{equation}
The sequence $(\sigma_m)_m$ is increasing to $+\infty$ and choose rank $m_0$ such that $\sigma_{m_0}>\frac{N+2}{2}\ge\sigma_{m_0-1}$. This implies that
\begin{equation}\label{inftym}
W^{2,1}_{\sigma_{m_0}}(Q)\subset L^{\infty}(Q).
\end{equation}
From  \eqref{parabregj}, \eqref{soboj} and \eqref{inftym}, and with the use of \eqref{estCarlemanstar}, we have that there exists a constant $C>0$ such that 
\begin{equation}\begin{aligned}
&\|p e^{(s+m_0\delta_1)\underline{\alpha}}\|_{L^{\infty}(Q)}=\|q^{m_0}\|_{L^\infty(Q)}\leq C\|q^0\|_{L^{\sigma_0}(Q)}=C\|p e^{s\underline{\alpha}}\|_{L^2(Q)}\\
&\leq C\|p_0e^{s\overline{\alpha}}\|_{L^2(Q_{\omega_0})} .
\end{aligned}\end{equation}

\begin{remark}\label{rem_obs}
In order to obtain the observability inequality we proceed in the classical manner, by multiplying scalarly in $L^2(\Omega)$ each equation  of  the system \eqref{genericparab} by $p_i$ and making use of dissipativity to find, for some constant $c>0$ depending only on the coefficients of the system, the inequality:
	%\begin{equation*}
	%\frac{1}{2}\frac{d}{dt}\|p\|^2_{L^2(\Omega)}-\|\nabla p\|^2_{L^2(\Omega)}+\langle c(x)p,p\rangle_{L^2(\Omega)}=0
	%\end{equation*}
	
	\begin{equation*}
	\frac{1}{2}\frac{d}{dt}\|p\|^2_{L^2(\Omega)}+c \|p\|^2_{L^2(\Omega)}\geq 0,
	\end{equation*} 
	which gives
	\begin{equation*}
	\|p(0)\|^2_{L^2(\Omega)}\leq \|p(t)\|^2_{L^2(\Omega)}e^{Ct}, t\in(0,T).
	\end{equation*}
	Consequently, for fixed  $s>s_0$, we have that 
	\begin{equation*}
	\|p(0)\|^2_{L^2(\Omega)}\leq \frac{T}{2}\int_{\frac{T}{4}}^{\frac{3T}{4}} \|p(t)\|^2_{L^2(\Omega)}e^{Ct}dt\leq K(T,s)\int_{0}^{T} \|p(t)\|^2_{L^2(\Omega)}e^{2s\underline\alpha}dt.
	\end{equation*} 
	Now, by Carleman estimate \eqref{estCarleman3} we obtain the observability inequality:
	\begin{equation}\label{obsineq}
	\|p(0,\cdot)\|^2_{L^2(\Omega)}\leq C\int_{Q_{\omega_0}}
	|p_0|^2e^{2s\overline\alpha}dxdt,
	\end{equation}
	with a constant $C=C(T,s,\delta,M,\{\underline\omega_j\}_j)$.
	
\end{remark}

\section{Linear system: null controllability}\label{seccontrtreelin}

The main controllability result concerning linear system \eqref{linsystem} is the following
\begin{theorem}\label{thcontrol}
	Consider system \eqref{linsystem} with coefficients in $\mathcal{E}_{M,\delta,\{\underline\omega_i\}_i}$. Then there exists a constant $C=C({M,\delta,\{\underline\omega_i\}_i})$ such that for all  $z^0\in  H$ there exists $u^*\in L^2(0,T;L^2(\omega_0))\cap L^\infty(Q_{\omega_0})$ which drives the corresponding solution to   \eqref{linsystem}, $z=z^{u^*}$ in $0$ \textit{i.e.}   $z(T,\cdot)=0$ and  satisfies the norm estimate 
	\begin{equation}\label{est_contr_l2}
	\|u^*e^{-s\overline\alpha}\|_{L^2(0,T;L^2(\omega_0))}+	\|u^* \|_{L^\infty(Q_{\omega_0})}\leq C\|z^0\|_{L^2(\Omega)}.
	\end{equation}
\end{theorem}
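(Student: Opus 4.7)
The plan is to construct $u^*$ as the limit of minimizers of a classical penalized-final-cost functional, using the weighted $L^2$ Carleman estimate \eqref{estCarleman3} to extract the $L^2$ part of \eqref{est_contr_l2} and the $L^\infty$--$L^2$ estimate \eqref{LinftyL2Carleman} to extract the $L^\infty$ part. Concretely, for each $\varepsilon>0$, on the Hilbert space $\mathcal U_s:=\{u\in L^2(Q_{\omega_0}):ue^{-s\overline\alpha}\in L^2(Q_{\omega_0})\}$, minimize
$$J_\varepsilon(u)=\frac12\int_{Q_{\omega_0}}|u|^2 e^{-2s\overline\alpha}\,dxdt+\frac1{2\varepsilon}\|z^u(T)\|^2_{L^2(\Omega)},$$
where $z^u$ is the solution of \eqref{linsystem} with initial datum $z^0$ and control $u$. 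Strict convexity, continuity and coercivity yield a unique minimizer $u_\varepsilon$; writing the Euler-Lagrange equation and using the duality between \eqref{linsystem} and \eqref{adjlinsystem} gives the characterization $u_\varepsilon=e^{2s\overline\alpha}p_{\varepsilon,0}|_{\omega_0}$, where $p_\varepsilon$ solves the homogeneous adjoint system \eqref{adjlinsystem} with terminal condition $p_\varepsilon(T)=-\varepsilon^{-1}z^{u_\varepsilon}(T)$.

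Multiplying \eqref{linsystem} (with $u=u_\varepsilon$) by $p_\varepsilon$, integrating by parts and inserting the characterization above on $Q_{\omega_0}$ gives the identity
$$\int_{Q_{\omega_0}}|u_\varepsilon|^2 e^{-2s\overline\alpha}\,dxdt+\frac1\varepsilon\|z^{u_\varepsilon}(T)\|^2_{L^2}=-\langle z^0,p_\varepsilon(0)\rangle_{L^2(\Omega)}.$$
Bounding $\|p_\varepsilon(0)\|_{L^2}$ through the observability inequality \eqref{obsineq} (consequence of Carleman estimate \eqref{estCarleman3}) by $C\|p_{\varepsilon,0}e^{s\overline\alpha}\|_{L^2(Q_{\omega_0})}=C\|u_\varepsilon e^{-s\overline\alpha}\|_{L^2(Q_{\omega_0})}$, and absorbing via Cauchy-Schwarz, one derives the $\varepsilon$-uniform bounds
$$\|u_\varepsilon e^{-s\overline\alpha}\|_{L^2(Q_{\omega_0})}\le C\|z^0\|_{L^2},\qquad \|z^{u_\varepsilon}(T)\|^2_{L^2}\le C\varepsilon\|z^0\|^2_{L^2},$$
with $C=C(M,\delta,\{\underline\omega_i\}_i)$.

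For the $L^\infty$ bound I would apply the $L^\infty$--$L^2$ estimate \eqref{LinftyL2Carleman} to $p_\varepsilon$ (legitimate because the adjoint is homogeneous), obtaining
$$\|p_\varepsilon\,e^{(s+m_0\delta_1)\underline\alpha}\|_{L^\infty(Q)}\le C\|u_\varepsilon e^{-s\overline\alpha}\|_{L^2(Q_{\omega_0})}\le C'\|z^0\|_{L^2}.$$
From the characterization one has pointwise on $Q_{\omega_0}$
$$|u_\varepsilon|\le e^{\,2s\overline\alpha-(s+m_0\delta_1)\underline\alpha}\,\|p_\varepsilon\,e^{(s+m_0\delta_1)\underline\alpha}\|_{L^\infty}.$$
By Remark~\ref{ordineaponderilor}(iii), for $\lambda$ large enough $\underline\alpha/\overline\alpha<2$, hence $2\overline\alpha-\underline\alpha<0$ is bounded away from zero; choosing $\delta_1$ sufficiently small relative to $s$ forces $2s\overline\alpha-(s+m_0\delta_1)\underline\alpha\le 0$ uniformly on $Q$, so $\|u_\varepsilon\|_{L^\infty(Q_{\omega_0})}\le C\|z^0\|_{L^2}$.

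Finally, the uniform bounds yield, along a subsequence, $u_\varepsilon\rightharpoonup u^*$ weakly in $\mathcal U_s$ and weakly-$*$ in $L^\infty(Q_{\omega_0})$; continuity of $u\mapsto z^u(T)$ as a map $L^2(Q_{\omega_0})\to L^2(\Omega)$ together with $\|z^{u_\varepsilon}(T)\|_{L^2}\to 0$ forces $z^{u^*}(T)=0$, and \eqref{est_contr_l2} follows by weak/weak-$*$ lower semicontinuity. The principal technical obstacle is the $L^\infty$ step: one must exactly balance the exponential weights so that $e^{2s\overline\alpha-(s+m_0\delta_1)\underline\alpha}$ is uniformly bounded on $Q$, which is precisely what Remark~\ref{ordineaponderilor}(iii) together with a careful choice of $\delta_1$ provides.
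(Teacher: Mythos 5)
Your proposal is correct and follows essentially the same route as the paper: the penalized optimal control problem with weighted $L^2$ cost, the adjoint characterization $u_\varepsilon=e^{2s\overline\alpha}p_{\varepsilon,0}|_{\omega_0}$, the uniform bounds via the observability inequality, the $L^\infty$ bound via the $L^\infty$--$L^2$ Carleman estimate combined with the weight comparison of Remark~\ref{ordineaponderilor}(iii), and passage to the limit. The only cosmetic difference is that you deduce $z^{u^*}(T)=0$ from weak continuity of the linear input-to-final-state map, whereas the paper invokes Aubin/Arzel\`a--Ascoli compactness; both are valid.
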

\newpage

\proof\quad

\subsubsection*{$L^2(Q)$ control.}
In order to obtain norm estimates for the controls driving the trajectory to the linear system in $0$, we consider a family of optimal control problems depending on a small parameter  $\varepsilon>0$:    
\begin{equation}\label{optimalpb}
\inf_{u\in L^2(Q_{\omega_0})}\frac{1}{2}\int_{Q_{\omega_0}}|u|^2e^{-2s\overline\alpha}dxdt+\frac{1}{2\varepsilon}\int_{\Omega}|z(T,\cdot)|^2dxdt, 
\end{equation} 
with $z=z^u$ the solution of the linear controlled system \eqref{linevpb}.
Classical results concerning optimal control with quadratic cost for parabolic equations  insure existence of optimal control $u^\varepsilon$ which by Pontriaghin maximum principle satisfy 
\begin{equation}\label{optimalcontrol}
u^\varepsilon=e^{2s\overline\alpha}\textbf{B}^*p^\varepsilon=e^{2s\overline\alpha}p_0^\varepsilon|_{\omega_0}.
\end{equation}
where $p^\varepsilon$ is solution to the adjoint system:
\begin{equation}
\begin{cases}
D_tp^\varepsilon=-\textbf{A}p^\varepsilon-\textbf{C}(t) p^\varepsilon-\textbf{A}_0^*(t) p^\varepsilon,\\
p^\varepsilon(T)=-\frac{1}{\varepsilon}z^\varepsilon(T).
\end{cases}
\end{equation}
By cross multiplying the equations for $z^\varepsilon=z^{u^\varepsilon}$ and $p^\varepsilon$ by $p^\varepsilon$ respectively $z^\varepsilon$ and integrating on $Q$ we obtain:
$$\frac{d}{dt}\langle z^\varepsilon,p^\varepsilon\rangle_{L^2(\Omega)}=\langle (A+A_0+C)z^\varepsilon+Bu^\varepsilon,p^\varepsilon\rangle_{L^2(\Omega)}-\langle (A+A_0+C)^*p^\varepsilon,z^\varepsilon\rangle_{L^2(\Omega)}.$$
We integrate  on  $[0,T]$ and use the observability inequality \eqref{obsineq} to get %, with $s_1>0$ chosen such that $$s_1>s \text{ and } s_1\overline{\alpha}<s\underline{\alpha}<s\overline{\alpha}.$$ Then, with  a  constant $C=C(T,s,s_1,\delta,M,\{\underline\omega_j\}_j)$ we have
\begin{equation*}\begin{aligned}
&\frac{1}{\varepsilon}\|z^\varepsilon(T,\cdot)\|_{L^2(\Omega)}^2+\langle u^\varepsilon,B^*p^\varepsilon\rangle_{L^2(Q)}=-\langle z^\varepsilon(0,\cdot),p^\varepsilon(0,\cdot)\rangle_{L^2(\Omega)}\\
&\leq \|z^0\|_{L^2(\Omega)}\|p(0,\cdot)\|_{L^2(\Omega)}\leq C\|z^0\|_{L^2(\Omega)}\left(\int_{Q_{\omega_0}}
|p^\varepsilon_0|^2e^{2s\overline\alpha}dxdt\right)^{\frac{1}{2}}.
\end{aligned}
\end{equation*}

Since $\langle u^\varepsilon,B^*p^\varepsilon\rangle_{L^2(Q)}=\int_{Q_{\omega_0}}
		|p^\varepsilon_0|^2e^{2s\overline\alpha}dxdt$, using appropriately balanced  Young's inequality, we find that
\begin{equation}
\frac{1}{\varepsilon}\|z^\varepsilon(T,\cdot)\|_{L^2(\Omega)}^2+\frac{1}{2}\int_{Q_{\omega_0}}|p^\varepsilon_0|^2e^{2s\overline\alpha}dxdt\leq C\|z^0\|_{L^2(\Omega)}^2,
\end{equation}
and gives by \eqref{optimalcontrol} the following  estimate for the sequence of optimal controls $(u^\varepsilon)_\varepsilon$ and final state:
\begin{equation}
\frac{1}{\varepsilon}\|z^\varepsilon(T,\cdot)\|_{L^2(\Omega)}^2+\frac{1}{2}\int_{Q_{\omega_0}}
|u^\varepsilon|^2e^{-2s\overline\alpha}dxdt\leq C\|z^0\|_{L^2(\Omega)}^2.
\end{equation}

Now, this $L^2$ bound for the sequence $(u^\varepsilon)_\varepsilon$, allows to extract a subsequence, denoted for simplicity also $(u^\varepsilon)_\varepsilon$ weakly convergent in $L^2(Q)$ to a limit $u^*$.

Write the  corresponding solutions $(z^\varepsilon)_\varepsilon$ as 
$$z^\varepsilon=w^\varepsilon+v
$$
where $w^\varepsilon$ is solution to \eqref{linsystem} with initial data $w^\varepsilon(0)=0$ and $v$ solution to  homogeneous equation
$$
D_tv=\textbf{A}v+(A_0+C)v=0,\,v(0)=z^\varepsilon(0)=z^0.
$$
We have that the sequence $(w^\varepsilon)_\varepsilon$ is bounded in $L^2(0,T; D(\textbf{A}))$  and the sequence of derivatives $({D_tw^{\varepsilon}})_\varepsilon$ is bounded in $L^2(0,T;L^2(\Omega))$. By Aubin's theorem we can extract a subsequence, denoted also $(w^\varepsilon)_\varepsilon$, strongly convergent in  $L^2(0,T; H_0^1(\Omega))$ to $w\in L^2(0,T; H_0^1(\Omega))\cap L^2(0,T; D(\textbf{A}))$. Consequently $(z^\varepsilon)$ is strongly convergent in  $L^2(0,T; H_0^1(\Omega))$ to $z\in L^2(0,T; H_0^1(\Omega)).$
We may now pass to the limit in the weak formulation of solutions to \eqref{linsystem}, \eqref{linevpb}; thus, for some test function $\bm\varphi\in [H^1_0(\Omega)]^{n+1}$,  we have
\begin{equation}
\begin{cases}
\displaystyle\langle z^\varepsilon(t,\cdot),\bm\varphi\rangle_{L^2(\Omega)}-\langle z^\varepsilon(0,\cdot),\bm\varphi\rangle_{L^2(\Omega)}+\int_0^t\langle \nabla z^\varepsilon(\tau,\cdot),\nabla\bm\varphi \rangle_{L^2(\Omega)}d\tau\\ \\\displaystyle
+\int_0^t\langle(A_0+C) z^\varepsilon,\bm\varphi\rangle_{L^2(\Omega)} d\tau=\int_{(0,t)\times\omega_0}u^\varepsilon\bm\varphi dxd\tau,\\
z^\varepsilon(0,\cdot):=z^0,
\end{cases}
\end{equation}
and we find that  $z\in L^2(Q)$ is solution to the problem  (\ref{linevpb}) with initial datum $z^0\in L^2(\Omega)$. In fact, by Arzel\`a-Ascoli theorem $w^\varepsilon\rightarrow w$ in $C([0,T],L^2(\Omega))$ and thus   $z(T)=0$ and by weak lower semicontinuity of the $L^2$ norm we also have the following estimate for the control driving the solution to 0:
\begin{equation}\label{boundL2control}
\int_{Q_{\omega_0}}|u^*|^2 e^{-2s\overline\alpha}dxdt\leq C\|z^0\|_{L^2(\Omega)}^2.
\end{equation}
where $C=C(T,s,s_1,M,\delta,\{\underline\omega_j\}_j).$

\subsubsection*{$L^\infty(Q)$- control.} 

Regarding the  $L^\infty$ norm estimates for the sequence $(u^\varepsilon)_{\varepsilon}$ and also for $u^*$  we will use the results from the previous section \S\ref{secobservcarl}:
\begin{equation}\label{aaa}
\|u^\varepsilon e^{-2s\overline\alpha+(s+m_0\delta_1)\underline{\alpha}}\|_{L^{\infty}(Q_{\omega_0})}=\|p_0^\varepsilon e^{(s+m_0\delta_1)\underline{\alpha}}\|_{L^{\infty}(Q_{\omega_0})}\leq C \|z^0\|_{L^2(Q)}.
\end{equation}
Now we see that we could start from the beginning with $\lambda$ big enough such that  \eqref{ordineaponderilor2} holds and in consequence
  $$2s\overline\alpha\leq(s+m_0\delta_1)\underline{\alpha}.$$
 As $-2s\overline\alpha+(s+m_0\delta_1)\underline{\alpha}>0$, by passing to $L^\infty$ weak-* limit in \eqref{aaa}, we find that 
  \begin{equation}
  \|u^*\|_{L^{\infty}(Q_{\omega_0})}\leq\|u^* e^{-2s\overline\alpha+(s+m_0\delta_1)\underline{\alpha}}\|_{L^{\infty}(Q_{\omega_0})}\leq C \|z^0\|_{L^2(Q)},
  \end{equation}
which concludes \eqref{est_contr_l2}.

\fin
%%%%%%%%%%%%%%%%%%%%%%%%%%%%%%%%%%%%%%%%%%%%%%%%%%%%%%%

\section{Nonlinear system: local exact controllability}

We prove in this section the following local controllability result concerning system \eqref{nonlinsystem}:
\begin{theorem}\label{th_local_contr}
	Suppose $\overline y$ is a stationary state, \textit{i.e.} solution to \eqref{nonlinelliptic}, and that the functions $f_j,j\in\overline{0,n}$ satisfy hypotheses \textit{(H1)}, \textit{(H2)}. Then, for all $\beta_0>0$ there exist  $\zeta_0=\zeta_0(\beta_0)>0$ and $C=C(\beta_0,\{\underline \omega_i\}_i,\overline{y})$ such that if $\|y^u(0)-\overline y\|<\zeta_0$ there exists a control $u\in L^\infty(Q)$ satisfying
$$
\|u\|_{L^\infty(Q)}\le C\|y^u(0)-\overline y\|_{L^\infty(\Omega)}
$$
and 
	$$y^u(T,\cdot)=\overline y,$$
	with
	$$ \|y(t,\cdot)-\overline y\|_{L^\infty}\le \beta_0, \,t\in[0,T].$$
\end{theorem}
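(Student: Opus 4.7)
The plan is to reduce the nonlinear controllability problem to a Kakutani-type fixed-point problem built on the uniform linear controllability result of Theorem \ref{thcontrol}. First I would translate by $\overline y$, setting $w_j := y_j - \overline y_j$, and subtract the stationary system \eqref{nonlinelliptic} from \eqref{nonlinsystem}. Using the mean value theorem, and writing $\tilde y_j(\tau) := \overline y_j + \tau w_j$, one has
\begin{equation*}
f_i(x,\tilde y_0(1),\tilde y_i(1)) - f_i(x,\overline y_0,\overline y_i) = \tilde a_{i0}(x,w)\, w_0 + \tilde c_i(x,w)\, w_i,
\end{equation*}
with
\begin{equation*}
\tilde a_{i0}(x,w) := \int_0^1 \partial_{y_0} f_i\bigl(x,\tilde y_0(\tau),\tilde y_i(\tau)\bigr)\,d\tau,
\end{equation*}
and a similar decomposition for $f_0(x,\tilde y_0(1))-f_0(x,\overline y_0)$. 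The translated system for $w$ then has exactly the form \eqref{linsystem} but with $w$-dependent coefficients, initial datum $w(0) = y^0 - \overline y$, and the same scalar control in the first component. Hypothesis \textit{(H1)} forces $\tilde a_{i0}(\cdot,w) \equiv 0$ on $\Omega\setminus\omega_i$ for every $w$, and \textit{(H2)} together with continuity of $\partial_{y_0} f_i$ lets me pick an open set $\underline\omega_i\subset\subset (\omega_i\cap\omega_0)\setminus\overline{\bigcup_{j\ne 0,i}\omega_j}$ and a $\delta_0>0$ such that $\bigl|\partial_{y_0} f_i(x,\overline y_0(x),\overline y_i(x))\bigr| \geq 2\delta_0$ on $\underline\omega_i$.

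Next I would set up the fixed-point scheme on the closed ball
\begin{equation*}
\mathcal{K}_R := \{v\in L^\infty(Q)^{n+1} : \|v\|_{L^\infty(Q)} \leq R\},\qquad R\leq \beta_0,
\end{equation*}
with $R$ so small that for every $v\in\mathcal{K}_R$ the frozen coefficients $a_{i0}^v(t,x):= \tilde a_{i0}(x,v(t,x))$ and $c_j^v(t,x):=\tilde c_j(x,v(t,x))$ satisfy $|a_{i0}^v|\geq \delta_0$ on $Q_{\underline\omega_i}$ and $\|a_{i0}^v\|_{L^\infty},\|c_j^v\|_{L^\infty}\leq M$ for a single $M = M(\overline y, R)$. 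Thus $\{a_{i0}^v,c_j^v\}\in \mathcal{E}_{M,\delta_0,\{\underline\omega_i\}_i}$ uniformly in $v$, and Theorem \ref{thcontrol} produces a control $u^v\in L^\infty(Q_{\omega_0})$ driving the frozen linear state $w^v$ to zero at time $T$, with
\begin{equation*}
\|u^v\|_{L^\infty(Q_{\omega_0})} \leq C\|y^0-\overline y\|_{L^2(\Omega)} \leq C'\|y^0-\overline y\|_{L^\infty(\Omega)}.
\end{equation*}
I then define the set-valued map $\Lambda v := \{w^v\}$ and apply Kakutani's theorem: convexity of $\Lambda v$ follows from linearity of the frozen system, compactness from a $W^{2,1}_p$-estimate with $p>(N+2)/2$ and the embedding of Lemma \ref{lemmaLady1} into a H\"older space, and upper semicontinuity from pointwise convergence of $a_{i0}^{v^n},c_j^{v^n}$ whenever $v^n\to v$ in $L^\infty$. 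The $L^\infty$ bound on $u^v$ together with maximal parabolic regularity yields $\|w^v\|_{L^\infty(Q)}\leq C''\|y^0-\overline y\|_{L^\infty(\Omega)}$, so choosing $\zeta_0\leq R/C''$ makes $\mathcal{K}_R$ invariant and at the same time gives the required trajectory bound $\|y(t,\cdot) - \overline y\|_{L^\infty}\leq \beta_0$ for all $t\in[0,T]$. A fixed point $w\in \Lambda w$ of the map then corresponds to a true solution of \eqref{nonlinsystem} driven to $\overline y$ by $u := u^w$.

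The hard part is guaranteeing uniformity of the class $\mathcal{E}_{M,\delta_0,\{\underline\omega_i\}_i}$ as $v$ ranges over $\mathcal{K}_R$, so that the constant $C$ in Theorem \ref{thcontrol} remains \emph{independent of $v$} throughout the iteration. This is precisely what hypotheses \textit{(H1)}--\textit{(H2)} are designed to guarantee: \textit{(H1)} locks the support of every $a_{i0}^v$ into $\omega_i$, while \textit{(H2)} and continuity of $\partial_{y_0} f_i$ yield the robust lower bound $|a_{i0}^v|\geq \delta_0$ on $Q_{\underline\omega_i}$ for all sufficiently small $L^\infty$ perturbations $v$. A secondary technical point is the compactness needed by Kakutani: weak $L^2$ convergence is not sufficient to pass to the limit in $\tilde a_{i0}(x,v)$, which forces the bootstrap from $L^2$ to $L^\infty$ via parabolic $L^p$-regularity and Lemma \ref{lemmaLady1}, exactly in the spirit of the $L^\infty$ controllability argument already carried out in \S\ref{seccontrtreelin}. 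Once this uniformity and compactness are in hand, Kakutani's theorem closes the loop and delivers the stated local controllability with the estimate $\|u\|_{L^\infty(Q)}\leq C\|y^u(0)-\overline y\|_{L^\infty(\Omega)}$.
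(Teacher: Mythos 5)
Your overall strategy is exactly the paper's: translate by $\overline y$, hide the nonlinearity in coefficients $a_{i0}^{v}, c_j^{v}$ via the integral form of the mean value theorem, use \textit{(H1)}--\textit{(H2)} plus continuity of $\partial_{y_0}f_i$ to place these coefficients uniformly in a class $\mathcal{E}_{M,\delta_0,\{\underline\omega_i\}_i}$ for all $v$ in a small $L^\infty$ ball, invoke Theorem \ref{thcontrol} for a $v$-independent control bound, and close with Kakutani using the $W^{2,1}_p\subset C^{0,\alpha}$ compactness. All of that matches the paper's proof of Theorem \ref{th_local_contr}.

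There is, however, one step that fails as literally written: you define $\Lambda v:=\{w^v\}$, a \emph{singleton} built from one particular control $u^v$ produced by Theorem \ref{thcontrol}. With a single-valued map, Kakutani degenerates to Schauder and upper semicontinuity becomes genuine continuity of the selection $v\mapsto u^v$. That continuity is not available: $u^v$ is obtained as a weak limit of the penalized optimal controls $u^\varepsilon$ after subsequence extraction, and nothing guarantees that this selection depends continuously (even sequentially in any useful topology) on the frozen coefficient $v$. Your own justification of upper semicontinuity --- ``pointwise convergence of $a_{i0}^{v^n},c_j^{v^n}$'' --- only lets you pass to the limit in the state equation once you already have a convergent (sub)sequence of controls; it does not show that the \emph{chosen} controls $u^{v^n}$ converge to the \emph{chosen} $u^{v}$. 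The paper avoids this by making the map genuinely set-valued: $F_{z^0}(\tilde z)$ is the set of \emph{all} states $z^{u,\tilde z}$ reachable with \emph{any} admissible control satisfying the uniform norm bound $J(u)\le K\|z^0\|_{L^2}$ and achieving $z^{u,\tilde z}(T)=0$. Then convexity is nontrivial but follows from linearity of $u\mapsto T_2^{\tilde z}(u)$ and convexity of $J$, and upper semicontinuity is proved by extracting a weakly convergent subsequence $u^m\rightharpoonup u$ from the uniformly bounded family and identifying the limit state as an element of $F_{z^0}(\tilde z)$. You should replace your singleton by this full admissible set; with that correction the rest of your argument goes through as in the paper.
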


\label{seccontrtreenonlin}

\proof
The approach to the  local null controllability of the system around the stationary state is based on the  Kakutani fixed point theorem.

For this aim,  with a solution $y$ to \eqref{nonlinsystem}, we consider the system satisfied by $z:=y-\overline{y}$, written as a linear system 
\begin{equation}\label{linsys2}
\left\lbrace
\begin{array}{ll}
D_tz_{0}-\Delta z_0=c_0^z(t,x)z_0+\chi_{\omega_0}u, & (0,T)\times\Omega,\\
D_tz_{i}-\Delta z_i=a_{i0}^z(t,x)z_0+c_i^z(t,x)z_i,\, i\in\overline{1,n},\, &(0,T)\times\Omega,\\
z_0=...=z_{n}=0, &(0,T)\times\partial\Omega,\\
z(0,x)=z^0(x):=y(0,x)-\overline y(x)&x\in\Omega,
\end{array}
\right.
\end{equation} 
where the nonlinearity is hidden into the coupling coefficients which are defined by:

\begin{eqnarray}\label{linearizationcoef}
\nonumber a^z_{i0}(t,x):=\int_0^1\frac{\partial}{\partial y_0}f_i(x,\overline y_0(x)+\tau z_0(t,x),\overline y_i(x)+\tau z_i(t,x))d\tau, \, i\in\overline{1,n}\\
\nonumber c^z_{j}(t,x):=\int_0^1\frac{\partial}{\partial y_j}f_j(x,\overline y_0(x)+\tau z_0(t,x),\overline y_j(x)+\tau z_j(t,x))d\tau, \,  j\in\overline{0,n}.\\
\end{eqnarray}

Observe that $\{a^0_{i0}, c^0_{j}\}_{i\in\overline{1,n},j\in\overline{0,n}}$ are the coefficients of the linearized system around the stationary solution $\overline y$ as
$$
a^0_{i0}(x)=\frac{\partial}{\partial y_0}f_i(x,\overline y_0(x) ,\overline y_i(x) ),$$
$$ c^0_{i}= \frac{\partial}{\partial y_i}f_i(x,\overline y_0(x), \overline y_i(x) ),c^0_{0}= \frac{\partial}{\partial y_0}f_0(x,\overline y_0(x) ) .
$$
We see now that hypotheses \eqref{fsuport} and \eqref{suportderiv} tell us that we may choose $M_0,\delta_0>0$ and $\underline\omega_i\subset\subset (\omega_i\cap\omega_0)\setminus\bigcup_{j\ne0,i}\omega_j$ such that 
\begin{equation}\label{coef_lin_E}
\{a^0_{i0}, c^0_{j}\}_{i\in\overline{1,n},j\in\overline{0,n}}\in\mathcal{E}_{M_0,\delta_0,\{\underline\omega_i\}_i}.
\end{equation}
Let  $\beta>0$ and define $\mathcal{M}_\beta$ to be:
\begin{equation}
\mathcal{M}_\beta=\lbrace \tilde{z}\in L^\infty(Q):\|\tilde{z}\|_{L^\infty(Q)}\leq\beta\rbrace.
\end{equation}
For  $\tilde{z}\in\mathcal{M}_\beta$, we consider the coefficients $a^{\tilde z}_{i0}(x),\, c^{\tilde z}_{j}(x) $ defined as in 
\eqref{linearizationcoef} with $z$ replaced by $\tilde z$. 

Observe now that  we may choose $\beta_0>0$ small enough such that if $\tilde z\in \mathcal{M}_{\beta_0}$ we have 
\begin{equation}
\label{coef_z_tilde}
\{a^{\tilde z}_{i0}, c^{\tilde z}_{j}\}_{i\in\overline{1,n},j\in\overline{0,n}}\in\mathcal{E}_{2M_0,\frac{\delta_0}{2},\{\underline\omega_i\}_i}.
\end{equation} 
Consider now the linear system  \eqref{linsys2} with coefficients $\{a^{\tilde z}_{i0}, c^{\tilde z}_{j}\}$:
\begin{equation}\label{linsys2.1}
\left\lbrace
\begin{array}{ll}
D_tz_{0}-\Delta z_0=c_0^{\tilde z}(t,x)z_0+\chi_{\omega_0}u, & (0,T)\times\Omega,\\
D_tz_{i}-\Delta z_i=a_{i0}^{\tilde z}(t,x)z_0+c_i^{\tilde z}(t,x)z_i,\, i\in\overline{1,n},\, &(0,T)\times\Omega,\\
z_0=...=z_{n}=0, &(0,T)\times\partial\Omega,\\
z(0,x)=z^0(x)&x\in\Omega.\end{array}
\right.
\end{equation} 
The linear   problem \eqref{linsys2.1} may be reformulated as: 
\begin{equation}\label{linsys2.2}
\left\lbrace
\begin{array}{ll}
D_tz=\textbf{A}z+\mathbf{A_0^{\tilde z}}(t)z+\mathbf{C^{\tilde z}}(t)z+\textbf{B}u, & t>0, \\
z(0)=z^0,   &\\
\end{array}
\right.
\end{equation} 
where $\mathbf{C^{\tilde z}}(t)z=C^{\tilde z}_0(t,\cdot)z(\cdot)$ and $\mathbf{A^{\tilde z}_0}(t)z=A^{\tilde z}_0(t,\cdot)z(\cdot)$ where  $C^{\tilde z}_0(t,x)=diag(c^{\tilde z}_i(t,x))_{i=\overline{0,n}}$ and the coupling matrix  $$A^{\tilde z}_0(t,x)=(0,a^{\tilde z}_{10}(t,x),\ldots,a^{\tilde z}_{n0}(t,x))^\top\cdot (1,0,\ldots,0).$$
\smallskip

 Theorem \ref{thcontrol} says that for $\tilde z\in \mathcal{M}_{\beta_0}$ there exists a control $u^*=u^*(\tilde z)\in L^2(0,T;L^2(\omega_0))\cap L^\infty(Q_{\omega_0})$   satisfying the norm estimate 
 \begin{equation}\label{est_contr_l2_0}\begin{aligned}
 & J(u^*):=\|u^*e^{-s\overline\alpha}\|_{L^2(0,T;L^2(\omega_0))}+	\|u^* \|_{L^\infty(Q_{\omega_0})}\\
 &  \leq C(2M_0,\delta_0/2,\{\underline\omega_i\}_i)\|z^0\|_{L^2(\Omega)},
 \end{aligned}
 \end{equation}
 and driving the  solution $z^{u^*,\tilde z}$ of the linear system  \eqref{linsys2.1} in zero : $z^{u^*,\tilde z}(T)=0$.
 Observe that $J$ is a norm in the space $\mathcal{U}^*:=L^2_{e^{-s\overline\alpha}}\cap L^\infty (Q_{\omega_0})$.
 
 We will write \begin{equation}
 \label{T1T2}
 z^{u,\tilde z}=T_1^{\tilde z}(z^0)+T_2^{\tilde z}(u),
 \end{equation}
 where the first term is the solution to problem \eqref{linsys2.1} with initial data $z^0$ and the second term is the solution to system \eqref{linsys2.1} with initial datum zero and control $u$. 
 Let us denote by 
 \begin{equation}
 \label{op_S}
 S_1(z^0)=e^{t\textbf{A}}z^0,\, S_2h=e^{t\textbf{A}}* h=\int_0^te^{(t-s)\textbf{A}}h(s)ds,
 \end{equation}
 where $h\in L^2(0,t;[L^2(\Omega)]^{n+1})$.
 With these notations
\begin{equation} \label{S-T} 
 z^{u,\tilde z}=T_1^{\tilde z}(z^0)+T_2^{\tilde z}(u)=S_1(z^0)+S_2(A_0^{\tilde z}z^{u,\tilde z}+C_0^{\tilde z}z^{u,\tilde z}+Bu).
\end{equation}
 
 \medskip
 
Fix an initial datum $z^0\in L^\infty(\Omega)$. We define now the following set-valued map, associated to $z^0$: 
\begin{equation}
\begin{aligned}
&F_{z^0}:\mathcal{ M}_{\beta_0}\rightarrow 2^{L^\infty(Q)}\\
F_{z^0}(\tilde z)&= \{z^{u,\tilde z}: u \text{ satisfies } \eqref{est_contr_l2_0} \text{ and } z^{u,\tilde z} (T)=0\}\\
&=\{T^{\tilde z}_1(z^0)+T^{\tilde z}_2(u): \, z^{u,\tilde z} (T)=0, J(u)\le K\|z^0\|_{L^2}\},
\end{aligned}
\end{equation}
where by $K$ we denoted the constant in \eqref{est_contr_l2_0}, $K=C(2M_0,\delta_0/2,\{\underline\omega_i\}_i)$.

In order to obtain local controllability of the nonlinear system it is enough to find a fixed point for $F_{z^0}$. We achieve this goal by applying Kakutani fixed point theorem to $F_{z^0}$ in $\mathcal{ M}_{\beta_0}$; we have thus to verify the following statements:
\begin{enumerate}
	\item[i)] For every $\widetilde{z}\in \mathcal M$, $F_{z^0}(\widetilde{z})$ is a nonempty, closed and convex subset of $L^\infty(Q)$;
	\medskip
	
	Observe that $z^{u^*(\tilde z)}\in F_{z^0}(\widetilde{z})$ and thus   $F_{z^0}(\widetilde{z})\not=\emptyset$.  Convexity comes from linearity of $T_2$ and convexity of $J$.
	
	To prove that  $F_{z^0}(\widetilde{z})$ is closed,  suppose $z^m\in F_{z^0}(\widetilde{z})$, $z^m\rightarrow z$ in $L^\infty$. We have to prove that $z\in  F_{z^0}(\widetilde{z})$. Indeed, we have that 
	$$
	z^m=T_1^{\tilde z}(z^0)+T_2^{\tilde z}(u^m)
	$$
	for some controls $u^m\in\mathcal{U}^*$ satisfying estimate $J(u^m)\le K\|z^0\|_{L^2}$.   
	We may now invoke Aubin-Lions and  Ascoli-Arzel\`a compactness results  (see \textit{e.g.} \cite{vra_c0}) applied to the solution operator of a parabolic initial boundary value  problem and thus to say that 
	$T_2$ is a compact operator from $L^2(0,T;L^2(\Omega_{\omega_0}))$ to $C([0,T];[L^2(\Omega)]^{n+1})\cap L^2(0,T;[H_0^1(\Omega)]^{n+1})$. Thus, extracting subsequence $u^m\rightharpoonup u$ weakly in $L^2(Q_{\omega_0})$ we find $$z^m\rightarrow z \text{ in } C([0,T];[L^2(\Omega)]^{n+1})\cap L^2(0,T;[H_0^1(\Omega)]^{n+1})$$ with $z(T)=0$ since $z_m(T)=0$. Thus $z\in  F_{z^0}(\widetilde{z}) $.
	\item[ii)] There exists $\zeta_0=\zeta_0(\beta_0)$ such that for $\|z^0\|_{L^\infty(\Omega)}<\zeta_0$  we have 
	$$F_{z^0}(\mathcal{ M}_{\beta_0})\subset \mathcal{ M}_{\beta_0}.$$
 This follows from the a priori estimates for solutions to initial boundary value problems for parabolic systems:
	$$
	\|T_1^{\tilde z}(z^0)\|_{L^\infty(\Omega)}\le C_1(\|\tilde z\|_{L^\infty})\|z^0\|_{L^\infty(\Omega)},
	$$
	$$
	\|T_2^{\tilde z}(u)\|_{L^\infty(\Omega)}\le C_2(\|\tilde z\|_{L^\infty})\|u\|_{L^\infty(Q_{\omega_0})}
	$$
	and from the remark that both constants depend in fact uniformly on the $L^\infty$ norm of the coupling coefficients and thus depend uniformly on the norm of $\tilde z$ in $L^\infty$.
	 
	\item[iii)]The set $F_{z^0}(\mathcal{ M}_{\beta_0})$ is imbedded into a convex and compact subset of $\mathcal{ M}_{\beta_0}$.
	
	Indeed, as $\mathcal{ M}_{\beta_0}$ is closed and convex, it is enough to prove that $F_{z^0}(\mathcal{ M}_{\beta_0})$ is relatively compact in $L^\infty$ topology. For this, take a sequence $z^m\in F_{z^0}(\mathcal{ M}_{\beta_0}) $. Correspondingly, there exist $\tilde z^m\in \mathcal{ M}_{\beta_0}$ with $z^m\in F_{z^0}(\tilde z^m)$. Take corresponding controls $u^m\in\mathcal{U}^*$ such that (see definition of $F_{z^0}$ and \eqref{S-T}):
	\begin{equation}\label{eqlim}
    z^m=T_1^{\tilde z^m}(z^0)+T_2^{\tilde z^m}(u^m)=  S_1(z^0)+S_2(A_0^{\tilde z^m}z^m+C_0^{\tilde z^m}z^m+Bu^m).
    \end{equation}
	We have the following bounded sequences
	\begin{itemize}
		\item  $\tilde z^m\in \mathcal{M}_{\beta_0}$  and so $A_0^{\tilde z^m}(Q),C_0^{\tilde z^m}(Q)$  are bounded in $L^\infty$;
		\item $z^m\in \mathcal{M}_{\beta_0}$ and is thus bounded in $L^\infty(Q)$;
		\item $u^m\in\mathcal{U}^*$ is bounded in $L^\infty(Q)$.
	\end{itemize}
Consequently  $ A_0^{\tilde z^m}z^m+C_0^{\tilde z^m}z^m+Bu^m$ is bounded in $L^p(Q), p>1$. By parabolic regularity (see \cite{lady}), $S_2(A_0^{\tilde z^m}z^m+C_0^{\tilde z^m}z^m+Bu^m)$ is bounded in any $W^{2,1}_p,\forall p, 1<p<\infty$ (the space of anisotropic Sobolev functions). For $p$ big enough we have $W^{2,1}_p\subset C^{0,\alpha}(\overline Q)$ for some $0<\alpha<1$ (the space of H\"older continuous functions). $C^{0,\alpha}(\overline Q)$    is  compactly imbedded in $C(\overline Q)$. Consequently $(z^m)_m$ is a relatively compact sequence  in $L^\infty(Q)$.

	\item[iv)] $F_{z^0}$ is upper semi-continuous, \textit{i.e.} if $z^m\rightarrow z$, $\tilde z^m\rightarrow\tilde z$ in 
	$L^\infty$ and $z^m\in F_{z^0}(\tilde z^m)$ then $z\in F_{z^0}(\tilde z)$.
	
	Indeed  we have (see \eqref{linearizationcoef}) that $A_0^{\tilde z^m}\rightarrow A_0^{\tilde z}$, $C_0^{\tilde z^m}\rightarrow C_0^{\tilde z}$ in $L^\infty$ and as $ (z^m)_m$ is relatively compact in $C([0,T];[L^2(\Omega)]^{n+1})$ we may pas to the limit in \eqref{eqlim} and find that $z \in F_{z^0}(\tilde z ).$
\end{enumerate}

Now we conclude the proof by Kakutani fixed point theorem, which insures existence of $z\in \mathcal{ M}_{\beta_0}$     such that $z\in F_{z^0}(  z )$ \textit{i.e.} there exists $\overline u\in\mathcal{U^*}$ such that $z^{\overline u,z}=z$. In conclusion $y^{\overline u}:=\overline y+z$ is the solution to the controlled system \eqref{nonlinsystem} with control $\overline u$ satisfying $y^{\overline u}(T)=\overline y$.
\fin 

\section{Parabolic systems with tree-like couplings. Null controllability.}\label{sec-tree}

The case of tree-type couplings is more technical to describe in the context of the needed hypotheses on the supports of coupling functions or coupling coefficients in the linear models. These hypotheses are essential for the construction of appropriate auxiliary and weight functions in the corresponding Carleman estimates which are established  for each equation associated to a node in the graph, estimates which in the end should couple well into a global observability estimate. 

The hypotheses we impose to the supports of the coupling coefficients allow to treat each equation corresponding to a node of the tree as the center of a star-like system together with the directly actuated variables and corresponding equations. The star-like sub-graphs at the same level of the tree should be, in some sense, independently actuated. 
\medskip

We will say that a controlled linear parabolic  system  has a tree-type coupling in zero order terms if the system has the form:
\begin{equation}\label{linsystemtree}
\left\lbrace
\begin{array}{ll}
D_tz_{0}-\Delta z_0=c_0(t,x)z_0+\chi_{\omega_0}u, & \text{ in }(0,T)\times\Omega,\\
D_tz_{i}-\Delta z_i=a_{i\textbf{k}(i)}(t,x)z_{\textbf{k}(i)}+c_i(t,x)z_i,\, i\in\overline{1,n},\, &\text{ in }(0,T)\times\Omega,\\
z_0=...=z_{n}=0, &\text{ on }(0,T)\times\partial\Omega,\\
z(0,\cdot)=z^0,
\end{array}
\right.
\end{equation} 
with the following assumptions on the function $\textbf{k}:\{1,\ldots,n\}\rightarrow \{0,1,\ldots,n\}$:

\begin{equation}\label{k}
\forall i\in \{1,\ldots,n\},\exists m=m(i), 1\le m\le n-1, (\textbf{k}\circ)^m(i)=\textbf{k}\circ\ldots\circ\textbf{k}(i)=0.
\end{equation}

The linear   problem \eqref{linsystemtree} may be reformulated as: 
\begin{equation}\label{linsystree}
\left\lbrace
\begin{array}{ll}
D_tz=\textbf{A}z+\mathbf{A_0}(t)z+\mathbf{C}(t)z+\textbf{B}u, & t>0,  \\
z(0)=z^0,   &\\
\end{array}
\right.
\end{equation} 
where $\mathbf{C}(t)z=C_0(t,\cdot)z(\cdot)$ and $\mathbf{A_0}(t)z=A_0(t,\cdot)z(\cdot)$ with  $$C_0(t,x)= diag(c_i(t,x))_{i=\overline{0,n}},$$ and the coupling matrix  $$A_0(t,x)=(a_{il})_{i,l\in\overline{1,n}}=(a_{i\textbf{k}(i)}\delta_{l\textbf{k}(i)})_{i,l\in\overline{1,n}},$$
where we denoted by $\delta_{lj}$   the Kronecker symbol.
Denote by 
$$
\textbf{I}_j=\textbf{k}^{-1}(j)=\{i\in\overline{1,n}:\textbf{k}(i)=j\}.
$$
Fix now a family of open subsets  $\omega_i\subset\Omega,i\in\overline{1,n}$ such that
\begin{equation}
\label{controlierarhic1}
D_i:=\omega_i\cap\omega_{\textbf{k}(i)}\cap\cdots\cap\omega_{(\textbf{k}\circ)^{m(i)}}\ne\emptyset.
\end{equation}
\begin{equation}\label{controlierarhic2}
D_i\setminus\bigcup_{j\ne i,\textbf{k}(j)=\textbf{k(i)}}\omega_j\ne\emptyset.
\end{equation}
Choose further  a family of open subsets   $\{\underline\omega_j\}_{j\in\overline{0,n}}$ with the properties  
\begin{eqnarray}
\label{ci1}\underline\omega_0\subset\subset\omega_0,\quad \underline{\omega}_i\subset\subset D_i\setminus\bigcup_{l\ne i,\textbf{k}(l)=\textbf{k(i)}}\omega_l,\\
\label{ci2}\underline{\omega}_i\subset\subset\underline{\omega}_{\textbf{k}(i)}\subset\subset \underline\omega_0,\,i\in\overline{1,n}.
\end{eqnarray} 
For $M,\delta>0,$ and the family of open subsets described above $\{\underline\omega_i\}_i$, we introduce the following classes  of coefficients sets:
\begin{equation}\label{hyptree}
\begin{aligned}
&\mathcal{E}_{M,\delta,\{\underline\omega_i\}_i,\textbf{k}}=\biggl\{ E=\{a_{i\textbf{k}(i)},c_j\}_{i\in\overline{1,n},j\in\overline{0,n}}:a_{i\textbf{k}(i)},c_j\in L^\infty(Q),  \\
&\|a_{i\textbf{k}(i)}\|_{L^\infty}, \|c_j\|_{L^\infty}\leq M, a_{i\textbf{k}(i)}=0 \text{ in } Q\setminus Q_{\omega_i},\text{ and } |a_{i\textbf{k}(i)}|\ge\delta \text{ on }Q_{\underline\omega_i} \biggr\}.
\end{aligned}
\end{equation}

In order to study controllability  we consider the system adjoint to  system \eqref{linsystemtree}:
\begin{equation}\label{adjlinsystemtree}
\left\lbrace
\begin{aligned}
&-D_tp_{j}-\Delta p_j-c_j(t,x)p_j=\sum_{l,\textbf{k}(l)=j}a_{lj}(t,x)p_l=\mathcal{N}_{j}(t,x),\, j\in\overline{0,n},\, \text{ in }Q,\\
&p_0=...=p_{n}=0, \text{ on }(0,T)\times\partial\Omega,\\
\end{aligned}
\right.
\end{equation}
where for simplicity of further calculations we denoted by $$\mathcal{N}_{j}(t,x)=\sum_{l,\textbf{k}(l)=j}a_{lj}(t,x)p_l(t,x).$$
%In abstract form, the adjoint  to \eqref{linsystree} reads:
%\begin{equation}\label{adjlinsystree}
%-D_tp=\textbf{A}p+\mathbf{A_0^*}(t)p+\mathbf{C}(t)p,  t>0.
%\end{equation} 

As we have seen in the previous sections all controllability results have as essential ingredient an appropriate Carleman inequality for the adjoint system. For obtaing such estimates it is essential to have corresponding auxiliary functions which appear in the construction of the weights. We describe this in what follows

Consider again open subsets 
$$\tilde\omega_j\subset\subset\underline\omega_j, j\in\overline{0,n},$$ 
and auxiliary functions
$$\eta_j\in C^2(\overline{\Omega}),\, 0<\eta_j \text{ in }\Omega,\,\eta_j|_{\partial\Omega}=0,\{x\in\overline\Omega: |\nabla\eta_j(x)|=0\}\subset\subset\tilde\omega_j, j\in\overline{0,n}.$$
We construct now the weight functions entering the various Carleman es\-timates, with the following properties: 
\begin{enumerate}
	\item[i)]$\psi_{j,f}, j\in\overline{0,n},\textbf{I}_j\ne\emptyset$, $\psi_{i,s},i\in\overline{1,n}$ are defined by
	\begin{equation}\label{psi-star}
	\psi_{j,f}:=\eta_j+K_j,  \quad \psi_{i,s}:=\eta_i+\tilde K_i
	\end{equation}
	for some fixed positive constants $K_j,\tilde K_i>0$  and such that for a fixed $\epsilon>0$ we have
	\begin{equation}\label{psifpsis0}
	\psi_{i,s}>\sup_{\overline\Omega}\psi_{j,f}+2\epsilon, \forall i\in \textbf{I}_j,\,\textbf{I}_j\ne\emptyset;
	\end{equation}
	\begin{equation}\label{psifpsis1}
	\psi_{i,f}>\sup\{ \psi_{l,s}:\textbf{k}(l)=\textbf{k}(i)\}+2\epsilon, \forall i\in\overline{1,n},  \textbf{I}_i\ne\emptyset;
	\end{equation}
	
	\item[ii)]	\begin{equation}\label{psipsitree}
	\frac{\sup \psi_{j,f}}{\inf\psi_{j,f}}<\frac87, \frac{\sup \psi_{i,s}}{\inf\psi_{i,s}}<\frac87;
	\end{equation}
	\item[iii)] For $j\in\overline{0,n}$ such that $\textbf{I}_j\ne\emptyset$ we define
	\begin{eqnarray}
	\overline{\psi}_j=\sup\{\psi_{j,f}(x), \psi_{i,s}(x): i\in \textbf{I}_j, x\in\Omega\}+\epsilon,\\
		\underline{\psi}_j=\inf\{\psi_{j,f}(x), \psi_{i,s}(x): i\in \textbf{I}_j, x\in\Omega\}-\epsilon.
	\end{eqnarray}
	\item[iv)] Denote by $\overline\psi=\sup\{\overline\psi_j:\textbf{I}_j\ne\emptyset\}$ and $\underline\psi=\inf\{\underline\psi_j:\textbf{I}_j\ne\emptyset\}$ and 
	\begin{equation}\label{fialfabartree}
	\overline\varphi_j(t)=\overline\varphi_j^\lambda(t):=\frac{e^{\lambda\overline\psi_j}}{t(T-t)},\quad
	\overline\alpha_j(t)=\overline\alpha_j^\lambda(t):=\frac{e^{\lambda\overline\psi_j}-e^{1.5\lambda\overline\psi}}{t(T-t)},
	\end{equation}
	\begin{equation}\label{fialfabartree2}
	\underline\varphi_j(t)=\underline\varphi_j^\lambda(t):=\frac{e^{\lambda\underline\psi_j}}{t(T-t)},\quad
	\underline\alpha_j(t)=\underline\alpha_j^\lambda(t):=\frac{e^{\lambda\underline\psi_j}-e^{1.5\lambda\overline\psi}}{t(T-t)}.
	\end{equation}
	\begin{equation}\label{fialfabartree3}
	\underline\alpha(t)=\frac{e^{\lambda\underline\psi}-e^{1.5\lambda\overline\psi}}{t(T-t)},\, \overline\alpha(t)=\frac{e^{\lambda\overline\psi}-e^{1.5\lambda\overline\psi}}{t(T-t)}
	\end{equation}
\end{enumerate}

\begin{remark}
	\label{rem_weight_order}
	Observe that this construction of the weight functions allows saying that 
	$$
	\overline \psi_j<\underline\psi_i, i\in \textbf{I}_j, \textbf{I}_j\ne \emptyset,
	$$
	and thus, given  $\theta>0$  there exists $ s(\theta)$ such that for $s>s(\theta)$ we have
\begin{equation}
e^{s\overline\alpha_j(t)}\leq \theta  e^{s\underline\alpha_i(t)}, i\in \textbf{I}_j, \textbf{I}_j\ne \emptyset, t\in[0,T].
\end{equation}
\end{remark}

The Carleman estimates we establish now in the tree coupling case are given in the following theorem: 

\begin{theorem}\label{thCarltree} Suppose that the coupling coefficients in \eqref{adjlinsystemtree} satisfy $$\{a_{i\textbf{k}(i)},c_j\}_{i\in\overline{1,n},j\in\overline{0,n}}\in\mathcal{E}_{M,\delta,\{\underline\omega_i\}_i,\textbf{k}}.$$
	
	Then there exist constants $\lambda_0,s_{0}$ such that for $\lambda>\lambda_0$ there exists a constant $C>0$ depending on $(M,\delta,\{\underline\omega_i\}_i,\lambda)$
 such that, for any $ s\geq s_0$, the following inequality holds:
	\begin{equation}\label{estCarlemantree}\begin{aligned}
	&\int_Q(|D_tp|^2+|D^2p|^2+|D p|^2+|p|^2)e^{2s\underline\alpha}dxdt\\
	&\leq C\int_{Q_{\omega_0}}|p_0|^2e^{2s\overline\alpha}dxdt\\
	\end{aligned}
	\end{equation}
	for all $p\in H^1(0,T;L^2(\Omega))\cap L^2(0,T; H^2(\Omega))$ solution of \eqref{adjlinsystemtree}.
	
	Moreover, there exists $m_0\in\mathbb{N}$ and $\delta_1>0$  such that  we have the following $L^\infty-L^2$ Carleman estimate
	\begin{equation}\label{LinftyL2Carlemantree}
	\|p e^{(s+m_0\delta_1)\underline{\alpha}}\|_{L^{\infty}(Q)}\leq C \|p_0 e^{s\overline\alpha}\|_{L^2(Q_{\omega_0})}.
	\end{equation}
\end{theorem}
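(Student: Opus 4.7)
The plan is to bootstrap Theorem \ref{th1obs} tree-wise: apply a local star-type Carleman estimate at each internal node $j$ of the tree (those with $\textbf{I}_j\neq\emptyset$), and then glue the resulting local inequalities together using the hierarchical weight structure built into \eqref{psi-star}--\eqref{fialfabartree3}.

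For each internal $j$, I view the equation for $p_j$ together with the equations for its direct children $\{p_l:l\in\textbf{I}_j\}$ as a nonhomogeneous star-type adjoint system in the sense of \eqref{withsource}: the center equation for $p_j$ has zero source, while the equation for each child $l$ carries the source $\mathcal{N}_l=\sum_{m\in\textbf{I}_l}a_{ml}p_m$ coming from its own children in the tree (which vanishes if $l$ is a leaf). Running the proof of Theorem \ref{th1obs} on this local star with weights $\psi_{j,f}$ for $p_j$ and $\psi_{l,s}$ for each $p_l$, $l\in\textbf{I}_j$, yields a local Carleman inequality of the form
\begin{equation*}
\sum_{l\in\textbf{I}_j\cup\{j\}}\int_Q\bigl(|D_tp_l|^2+|D^2p_l|^2+|Dp_l|^2+|p_l|^2\bigr)e^{2s\underline\alpha_j}\,dxdt\leq C\int_{Q_{\tilde\omega_j}}|p_j|^2e^{2s\overline\alpha_j}\,dxdt+C\sum_{l\in\textbf{I}_j}\int_Q|\mathcal{N}_l|^2e^{2s\overline\alpha_j}\,dxdt.
\end{equation*}

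After summing these local inequalities over all internal $j$, two families of right-hand side terms must be absorbed. The first are the grandchild contributions $\int_Q|p_m|^2e^{2s\overline\alpha_j}\,dxdt$ arising from $|\mathcal{N}_l|^2$ for $m\in\textbf{I}_l$, $l\in\textbf{I}_j$. Combining \eqref{psifpsis0} and \eqref{psifpsis1} one verifies $\overline\psi_j<\underline\psi_l$ for every $l\in\textbf{I}_j$, so by Remark \ref{rem_weight_order} taking $s$ large makes $e^{2s\overline\alpha_j}\leq\theta\,e^{2s\underline\alpha_l}$ with $\theta$ as small as desired; these terms are then dominated by the $|p_m|^2e^{2s\underline\alpha_l}$ contribution on the left-hand side of the local Carleman estimate centered at $l$. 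The second family consists of the local observations $\int_{Q_{\tilde\omega_j}}|p_j|^2e^{2s\overline\alpha_j}\,dxdt$ for non-root internal $j$. For these I repeat the cutoff-multiplier argument used between \eqref{ec0multipl} and \eqref{md3}, now applied to the equation for $p_{\textbf{k}(j)}$ restricted to $\underline\omega_j$: by the geometric condition \eqref{controlierarhic2}, $\underline\omega_j$ is disjoint from every sibling domain $\omega_l$ with $\textbf{k}(l)=\textbf{k}(j)$ and $l\neq j$, so on $\underline\omega_j$ the only surviving coupling in that equation is $a_{j,\textbf{k}(j)}p_j$; using $|a_{j,\textbf{k}(j)}|\ge\delta$ on $\underline\omega_j$ and multiplying by a cutoff times $\gamma_js^N\varphi^N p_je^{2s\alpha}$, integration by parts bounds $\int_{Q_{\tilde\omega_j}}|p_j|^2e^{2s\overline\alpha_j}\,dxdt$ by an observation of $p_{\textbf{k}(j)}$ on $\underline\omega_j$ plus lower-order terms in $p_j$ absorbed by the higher-order terms in the parent star's left-hand side. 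Iterating from the leaves toward the root $0$, the intermediate observations disappear and one is left with the single observation of $p_0$ on $\omega_0$, yielding \eqref{estCarlemantree}.

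The $L^\infty$--$L^2$ estimate \eqref{LinftyL2Carlemantree} is then established verbatim as at the end of the proof of Theorem \ref{th1obs}: setting $q^j=pe^{(s+j\delta_1)\underline\alpha}$, one applies maximal parabolic regularity (Lemma \ref{lemmaLady1}) and anisotropic Sobolev embeddings along the sequence $(\sigma_j)$ of \eqref{seqsigmaj}, iterating until $W^{2,1}_{\sigma_{m_0}}(Q)\hookrightarrow L^\infty(Q)$, and closing through \eqref{estCarlemantree}. The main technical obstacle is orchestrating the weight hierarchy throughout the tree so that the intra-star ordering, the parent-child separation $\overline\psi_j<\underline\psi_l$, and the sibling separation \eqref{psifpsis1} all hold simultaneously at every level: this forces the constants $K_j,\tilde K_i$ in \eqref{psi-star} to be chosen inductively (say, from the leaves down), and the sets $\underline\omega_j$ to be constructed carefully so that \eqref{ci1}--\eqref{ci2} and the sibling-avoidance condition powering the integration-by-parts step hold uniformly at every node of the tree.
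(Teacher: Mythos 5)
Your proposal is correct and follows essentially the same route as the paper: local star-type Carleman estimates at each internal node (with the grandchild source terms absorbed via the weight ordering $e^{2s\overline\alpha_j}\le\theta e^{2s\underline\alpha_l}$ of Remark \ref{rem_weight_order}), followed by the cutoff-multiplier argument on the nested domains $\underline\omega_{j_l}\subset\subset\underline\omega_{j_{l+1}}$ to chain all intermediate observations down to the single observation of $p_0$, and the same bootstrap for the $L^\infty$--$L^2$ estimate. The only (immaterial) difference is bookkeeping: you collapse each star's child observations into the parent's before chaining, whereas the paper keeps all local observations through \eqref{estCarlemantree4} and performs the cascade reduction along the paths to the root at the very end.
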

\proof 
For $j\in\overline{0,n}$ we write separately Carleman inequalities for the case $\textbf{I}_j\ne\emptyset$ and respectively for the case  $\textbf{I}_j=\emptyset$.
If $j\in\overline{0,n}$ is such that $\textbf{I}_j\ne\emptyset$ we treat the equations satisfied by  $p_j$ and $p_l,l\in \textbf{I}_j$ as a nonhomogeneous  adjoin system, as in the star-like couplings \eqref{withsource}, while in the case $\textbf{I}_j=\emptyset$ we have to deal with homogeneous parabolic equations:
\begin{equation}\label{adjstarnonhom}
\left\lbrace
\begin{array}{ll}
-D_tp_{j}-\Delta p_j-c_j(t,x)p_j=\sum_{l,\textbf{k}(l)=j}a_{lj}(t,x)p_l,\, &\text{ in }(0,T)\times\Omega,\\
-D_tp_{l}-\Delta p_l-c_l(t,x)p_l=\mathcal{N}_{l}(t,x),&l\in \textbf{I}_j.
\end{array}
\right.
\end{equation}
For the case $\textbf{I}_j\ne\emptyset$ a Carleman estimate, which is an immediate consequence to intermediate estimate \eqref{estCarleman1},  states that there exists $\overline s_j$ and $C>0$ not depending on $s$ such that for $s>\overline s_j$ we have

	\begin{equation}\label{estCarlemantree1}\begin{aligned}
&\int_Q(|D_tp_j|^2+|D^2p_j|^2+|D p_j|^2+|p_j|^2)e^{2s\underline\alpha_j}dxdt\\
&+\int_Q\left[\sum_{i\in\textbf{I}_j}(|D_tp_i|^2+|D^2p_i|^2+|D p_i|^2+|p_i|^2)\right]e^{2s\underline\alpha_j}dxdt\\
&\leq C\left[\int_{Q_{\underline\omega_j}}|p_j|^2e^{2s\overline\alpha_j}dxdt+\sum_{i\in\textbf{I}j}\int_{Q_{\underline\omega_i}}|p_i|^2e^{2s\overline\alpha_j}\right]\\
&+C\sum_{i\in\textbf{I}_j}\int_{Q}|\mathcal{ N}_i(t,x)|^2e^{2s\overline\alpha_j}dxdt\\
	&  \leq C\left[\int_{Q_{\underline\omega_j}}|p_j|^2e^{2s\overline\alpha_j}dxdt+\sum_{i\in\textbf{I}_j}\int_{Q_{\underline\omega_i}}|p_i|^2e^{2s\overline\alpha_i}\right]dxdt\\
	&+C\sum_{i\in\textbf{I}_j,l\in \textbf{I}_i}\int_{Q}\theta|p_l(t,x)|^2e^{2s\underline\alpha_l}dxdt,
\end{aligned}
\end{equation}
where we have used Remark \ref{rem_weight_order} in order to say that $e^{2s\overline\alpha_j}\le\theta e^{2s\underline\alpha_i}\le \theta e^{2s\underline\alpha_l} $ for $\theta>0$  to be fixed later and  $s>s(\theta)$ big enough.
\medskip

In the case $\textbf{I}_j=\emptyset$, we write the Carleman estimate for the homogeneous equation$$-D_tp_{j}-\Delta p_j-c_j(t,x)p_j=0.$$
So, there exist constants $\overline s_j>0$ and $C>0$ such that for $s>\overline s_j$ 

\begin{equation}\label{estCarlemantree2}\begin{aligned}
&\int_Q(|D_tp_j|^2+|D^2p_j|^2+|D p_j|^2+|p_j|^2)e^{2s\underline\alpha_j}dxdt\\
&  \leq C\int_{Q_{\underline\omega_j}}|p_j|^2e^{2s\overline\alpha_j}dxdt.
\end{aligned}
\end{equation}
We add now estimates \eqref{estCarlemantree1} and \eqref{estCarlemantree2} and we obtain for some constant $C>0$ and $s>\max_j\overline s_j$:
\begin{equation}\label{estCarlemantree3}
\begin{aligned}
&\sum_{j\in\overline{0,n}}\int_Q(|D_tp_j|^2+|D^2p_j|^2+|D p_j|^2+|p_j|^2)e^{2s\underline\alpha_j}dxdt\leq\\
&C\left[\sum_{j\in\overline{0,n}}\int_{Q_{\underline\omega_j}}|p_j|^2e^{2s\overline\alpha_j}dxdt+
\sum_{j\in\overline{1,n}}\int_{Q}\theta|p_j(t,x)|^2e^{2s\underline\alpha_j}\right]dxdt.
\end{aligned}
\end{equation}
Choosing $\theta$  small enough we see that the integrals on $Q$ in the right side may be absorbed in the left side of the inequality and obtain 
\begin{equation}\label{estCarlemantree4}
\begin{aligned}
&\sum_{j\in\overline{0,n}}\int_Q(|D_tp_j|^2+|D^2p_j|^2+|D p_j|^2+|p_j|^2)e^{2s\underline\alpha_j}dxdt\\
&\leq C\sum_{j\in\overline{0,n}}\int_{Q_{\underline\omega_j}}|p_j|^2e^{2s\overline\alpha_j}dxdt.
\end{aligned}
\end{equation}
Observe now that for $j\ge 0$, by \eqref{k} there exists $m=m(j)$ and the sequence $j_0=j,j_1=\textbf{k}(j_0),\ldots,j_m=(\textbf{k}\circ)^m(j)=0$. Now, by \eqref{controlierarhic1}, \eqref{controlierarhic2}, \eqref{ci1}, \eqref{ci2}, and looking only to the subdomains $\underline \omega_{j_l}, l\in\overline{0,m}$ we find a sequence of equations for $l\in\overline{0,m-1}$, forming cascade like system:
\begin{equation}
-D_tp_{j_{l+1}}-\Delta p_{j_{l+1}}-c_{j_{l+1}}(t,x)p_{j_{l+1}}=a_{j_l,j_{l+1}}(t,x)p_{j_l},\, \text{ in }(0,T)\times\underline\omega_{j_{l+1}}.
\end{equation}
Now, as $\underline\omega_{j_l}\subset\subset\underline\omega_{j_{l+1}}$ we find, as in the \S\ref{secobservcarl}
\begin{equation}
\int_{Q_{\underline\omega_{j_{l}}}} |p_{j_{l}}|^2e^{2s\overline\alpha_{j_l}}dxdt\le C     \int_{Q_{\underline\omega_{j_{l+1}}}}|p_{j_{l+1}}|^2e^{2s\overline\alpha_{j_{l+1}}}dxdt.
\end{equation}
Consequently, for all $j\in\overline{1,n}$ we find, by coupling the chain estimates above, that
\begin{equation}
\int_{Q_{\underline\omega_{j}}} |p_{j }|^2e^{2s\overline\alpha_{j }}dxdt\le C \int_{Q_{\underline\omega_{0}}} |p_{0 }|^2e^{2s\overline\alpha_{0 }}dxdt,
\end{equation}
which plugged into \eqref{estCarlemantree4} gives a final Carleman estimate 
\begin{equation}\label{estCarlemantree5}
\begin{aligned}
&\sum_{j\in\overline{0,n}}\int_Q(|D_tp_j|^2+|D^2p_j|^2+|D p_j|^2+|p_j|^2)e^{2s\underline\alpha_j}dxdt\\
&\leq C\int_{Q_{\underline\omega_0}}|p_0|^2e^{2s\overline\alpha_0}dxdt.
\end{aligned}
\end{equation}
which gives the final conclusion in the $L^2-L^2$ framework, \eqref{estCarlemantree}.
 
 The $L^\infty-L^2$ estimate \eqref{LinftyL2Carlemantree} follows by the same lines in the corresponding Theorem \ref{th1obs}, using the bootstrap argument in connection to the regularity properties of the parabolic flow.\fin

The main result concerning controllability with one control for linear parabolic systems with tree-like couplings is the following:
\begin{theorem}\label{thcontrol_lin_tree}
	Consider system \eqref{linsystemtree} with coefficients in $\mathcal{\tilde E}_{M,\delta,\{\underline\omega_i\}_i}$. 
	Then there exists a constant $C=C({M,\delta,\{\underline\omega_i\}_i})$ such that for all  $z^0\in  H$ there exists $u^*\in L^2(0,T;L^2(\omega_0))\cap L^\infty(Q_{\omega_0})$ which drives the corresponding solution to   \eqref{linsystemtree} in $0$, \textit{i.e.} $z=z^{u^*}$  satisfies $z(T)=0$ and the control satisfies the norm estimate 
	\begin{equation}\label{est_contr_l2_tree}
	\|u^*e^{-s\overline\alpha}\|_{L^2(0,T;L^2(\omega_0))}+	\|u^* \|_{L^\infty(Q_{\omega_0})}\leq C\|z^0\|_{L^2(\Omega)}.
	\end{equation}
\end{theorem}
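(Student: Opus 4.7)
The plan is to repeat, essentially verbatim, the argument used for Theorem \ref{thcontrol} in the star-like case, substituting the Carleman estimates of Theorem \ref{thCarltree} for those of Theorem \ref{th1obs}. The structural ingredients are all available: the $L^2$--$L^2$ Carleman estimate \eqref{estCarlemantree} and the $L^\infty$--$L^2$ estimate \eqref{LinftyL2Carlemantree}, both with constants depending uniformly on $(M,\delta,\{\underline\omega_i\}_i,\lambda)$.

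First I would derive an observability inequality of the form
$$\|p(0)\|^2_{L^2(\Omega)}\le C\int_{Q_{\omega_0}}|p_0|^2e^{2s\overline\alpha}dxdt$$
for solutions of the adjoint system \eqref{adjlinsystemtree}, by combining \eqref{estCarlemantree} with a standard dissipativity/energy estimate on the subinterval $[T/4,3T/4]$, exactly as in Remark \ref{rem_obs}. The dissipativity argument is not sensitive to the tree structure; only the $L^\infty$ bounds on the coefficients matter.

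Next, for each $\varepsilon>0$ and each $z^0\in H$, consider the penalized optimal control problem
$$\inf_{u\in L^2(Q_{\omega_0})}\frac12\int_{Q_{\omega_0}}|u|^2e^{-2s\overline\alpha}dxdt+\frac{1}{2\varepsilon}\|z^u(T)\|^2_{L^2(\Omega)},$$
with $z^u$ solving \eqref{linsystree}. Existence and uniqueness of a minimizer $u^\varepsilon$ follow from coercivity and strict convexity, and the Pontryagin principle gives $u^\varepsilon=e^{2s\overline\alpha}p_0^\varepsilon|_{\omega_0}$ where $p^\varepsilon$ solves the adjoint problem with final datum $-z^\varepsilon(T)/\varepsilon$. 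Cross-multiplying state and adjoint equations, integrating on $Q$, and applying the observability inequality together with a balanced Young's inequality yields the uniform bound
$$\frac{1}{\varepsilon}\|z^\varepsilon(T)\|^2_{L^2(\Omega)}+\int_{Q_{\omega_0}}|u^\varepsilon|^2e^{-2s\overline\alpha}dxdt\le C\|z^0\|^2_{L^2(\Omega)}.$$
Then I would extract a weakly convergent subsequence $u^\varepsilon\rightharpoonup u^*$ in $L^2(Q_{\omega_0})$, and use Aubin--Lions compactness on the corresponding states $z^\varepsilon$ (splitting off the free evolution and treating the driven part in $L^2(0,T;D(\textbf{A}))\cap H^1(0,T;H)$) to pass to the limit in the weak formulation. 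The continuity in $C([0,T];H)$ yields $z^{u^*}(T)=0$, and weak lower semicontinuity of the weighted $L^2$ norm gives the $L^2$ part of \eqref{est_contr_l2_tree}.

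For the $L^\infty$ bound I would invoke the second Carleman estimate \eqref{LinftyL2Carlemantree} applied to $p^\varepsilon$: writing
$$\|u^\varepsilon e^{-2s\overline\alpha+(s+m_0\delta_1)\underline\alpha}\|_{L^\infty(Q_{\omega_0})}=\|p_0^\varepsilon e^{(s+m_0\delta_1)\underline\alpha}\|_{L^\infty(Q_{\omega_0})}\le C\|p^\varepsilon e^{s\underline\alpha}\|_{L^2(Q)}\le C\|z^0\|_{L^2(\Omega)},$$
where the last inequality follows by the uniform $L^2$ control bound together with the Carleman estimate for $p^\varepsilon$. By choosing $\lambda$ large enough so that \eqref{ordineaponderilor2} applies, one has $-2s\overline\alpha+(s+m_0\delta_1)\underline\alpha\ge 0$, so the weight on the left side is $\ge 1$; a weak-$*$ passage to the limit in $L^\infty(Q_{\omega_0})$ then delivers $\|u^*\|_{L^\infty(Q_{\omega_0})}\le C\|z^0\|_{L^2(\Omega)}$.

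I expect no substantial new obstacle: all the analytical difficulties have been packaged into Theorem \ref{thCarltree}, whose proof already handled the delicate ordering of weights $\{\underline\alpha_j,\overline\alpha_j\}$ along branches of the tree and the chain of cascade estimates propagating observation from $\underline\omega_0$ outward. The only point to double-check is that the constant $C$ in the final estimate depends only on $(M,\delta,\{\underline\omega_i\}_i)$, which is ensured by inspection of the constants at each step above.
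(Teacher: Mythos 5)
Your proposal is correct and follows essentially the same route as the paper, which itself simply states that the proof is identical to that of Theorem \ref{thcontrol} once the Carleman estimates of Theorem \ref{thCarltree} and the observability inequality of Remark \ref{rem_obs} are substituted, together with the choice of $\lambda$ ensuring \eqref{ordineaponderilor2} for the $L^\infty$ bound. You have merely spelled out the steps the paper leaves implicit, and all of them check out.
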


\proof
The proof is identical to the proof of Theorem \ref{thcontrol} by using the Carleman estimates for the linear adjoint system \eqref{adjlinsystemtree} given by  Theorem \ref{thCarltree} and a corresponding observability estimate as the one given by Remark \ref{rem_obs}.

Note here that for the $L^\infty$ estimate on the control, one needs to use in Carleman estimate a parameter $\lambda$ such that 
\eqref{ordineaponderilor2} holds.
\fin

\medskip

Controllability of nonlinear semilinear parabolic systems with tree-like couplings may be studied in analogy to the star-like case. We consider semilinear systems of parabolic equations, with tree type couplings in zero order terms, of the form
\begin{equation}\label{nonlinsystemtree}
\left\lbrace
\begin{array}{ll}
D_ty_{0}-\Delta y_0=\overline g_0(x)+f_0(x,y_0)+\chi_{\omega_0}u, & \text{ in }(0,T)\times\Omega,\\
D_ty_{i}-\Delta y_i=\overline g_i(x)+f_i(x,y_{\textbf{k}(i)},y_i),\, i\in\overline{1,n},\, &\text{ in }(0,T)\times\Omega,\\
y_0=...=y_{n}=0, &\text{ on }(0,T)\times\partial\Omega,\\
y(0,\cdot)=y^0,&
\end{array}
\right.
\end{equation} 
where $\overline g_j\in L^\infty(\Omega),\,  j\in\overline{0,n}$ and  $\overline y=(\overline{y}_0,...,\overline{y}_n)\in [L^\infty(\Omega)]^{n+1}$ is a corresponding stationary solution.

We assume the following hypotheses on the nonlinearities:
\begin{enumerate}
	\item[\textit{(H1')}] $f_0\in C^1(\Omega\times\R), f_i\in C^1(\Omega\times\R\times\R),i\in\overline{1,n} $  there exist $\omega_1,...\omega_{n}\subset \Omega$  open nonempty subsets of
	$\Omega$ satisfying \eqref{controlierarhic1},\eqref{controlierarhic2} and
	\begin{equation}
	(\omega_i\cap\omega_{\textbf{k}(i)})\setminus\bigcup_{j\ne i, \textbf{k}(j)=\textbf{k}(i)}\omega_j\neq\emptyset,\,\forall i\in\overline{1,n},
	\end{equation}
	and for all $i\in\overline{1,n}$ we have
	\begin{equation}\label{fsuport1}
	f_i(x,\tau,\xi)=0 \,\forall x\in\Omega\setminus\omega_i,\, \tau,\xi\in\R;
	\end{equation}
	\item[\textit{(H2')}]For a family of subdomains $\{\underline\omega_i\}_i$ satisfying \eqref{ci1},\eqref{ci2}, by defining for $i\in\overline{1,n}$ the coefficients
	 $$a^0_{i\textbf{k}(i)}(x):=\frac{\partial f_i}{\partial y_{\textbf{k}(i)}}(x,\overline{y}_{\textbf{k}(i)}(x),\overline{y}_i(x))$$
	$$
c^0_{0}(x):=\frac{\partial f_0}{\partial y_0}(x,\overline{y}_0(x)),\,	c^0_{i}(x):=\frac{\partial f_i}{\partial y_i}(x,\overline{y}_{\textbf{k}(i)}(x),\overline{y}_i(x)),
	$$
	
	we assume that for some $M_0,\delta_0>0$ we have
	\begin{equation}
	\{a^0_{i\textbf{k}(i)},c^0_j\}_{i\in\overline{1,n},j\in\overline{0,n}}\in\mathcal{E}_{M_0,\delta_0,\{\underline\omega_i\}_i,\textbf{k}}.
	\end{equation}

\end{enumerate}

\begin{theorem}\label{th_local_contr_tree}
	Suppose $\overline y$ is a stationary state to uncontrolled ($u=0$) \eqref{nonlinsystemtree} and that  functions $f_j,j\in\overline{0,n}$ satisfy hypotheses \textit{(H1')}, \textit{(H2')}. Then, for all $\beta_0>0$ there exist $\zeta_0=\zeta_0(\beta_0)>0$ and $C=C(\beta_0,\{\underline \omega_i\}_i,\overline{y})$ such that if $\|y^u(0)-\overline y\|<\zeta_0$ there exists a control $u\in L^\infty(Q)$ satisfying
	$$
	\|u\|_{L^\infty(Q)}\le C\|y^u(0)-\overline y\|_{L^\infty(\Omega)}
	$$
	and 
	$$y^u(T,\cdot)=\overline y,$$
	with
	$$ \|y(t,\cdot)-\overline y\|_{L^\infty}\le \beta_0,\,t\in[0,T]. $$
\end{theorem}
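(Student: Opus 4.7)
The plan is to reduce the nonlinear controllability problem to a fixed-point argument, following verbatim the strategy used in the proof of Theorem \ref{th_local_contr} for star-like couplings, with the star-like linear controllability result replaced by its tree-like analog Theorem \ref{thcontrol_lin_tree}. First I would linearize around the stationary state $\overline y$: setting $z:=y-\overline y$, the system for $z$ has the tree-coupled linear form of \eqref{linsystemtree} with coefficients
\begin{equation*}
a^z_{i\mathbf{k}(i)}(t,x):=\int_0^1\frac{\partial f_i}{\partial y_{\mathbf{k}(i)}}(x,\overline y_{\mathbf{k}(i)}+\tau z_{\mathbf{k}(i)},\overline y_i+\tau z_i)\,d\tau,
\end{equation*}
and an analogous expression for $c^z_j$. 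By (H2') the frozen coefficients $\{a^0_{i\mathbf{k}(i)},c^0_j\}$ lie in $\mathcal{E}_{M_0,\delta_0,\{\underline\omega_i\}_i,\mathbf{k}}$, and by continuity of the derivatives of $f_i$ one can choose $\beta_0>0$ so that for every $\tilde z\in \mathcal{M}_{\beta_0}:=\{\tilde z\in L^\infty(Q):\|\tilde z\|_{L^\infty}\le\beta_0\}$ the frozen-in-$\tilde z$ coefficients still lie in $\mathcal{E}_{2M_0,\delta_0/2,\{\underline\omega_i\}_i,\mathbf{k}}$; in particular the support and non-degeneracy conditions on $a^{\tilde z}_{i\mathbf{k}(i)}$ in $\underline\omega_i$ persist uniformly for such $\tilde z$.

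Next, for each such frozen $\tilde z$ I would apply Theorem \ref{thcontrol_lin_tree} to the linear system obtained by replacing the coupling matrix by $A_0^{\tilde z}$ and the diagonal matrix by $C_0^{\tilde z}$. This yields, with constants depending only on $M_0,\delta_0,\{\underline\omega_i\}_i$, a control $u^*(\tilde z)\in L^2_{e^{-s\overline\alpha}}\cap L^\infty(Q_{\omega_0})=:\mathcal U^*$ driving the corresponding trajectory $z^{u^*,\tilde z}$ to $0$ at time $T$ with the uniform bound
\begin{equation*}
J(u^*):=\|u^* e^{-s\overline\alpha}\|_{L^2(Q_{\omega_0})}+\|u^*\|_{L^\infty(Q_{\omega_0})}\le K\|z^0\|_{L^2(\Omega)},
\end{equation*}
where $K=C(2M_0,\delta_0/2,\{\underline\omega_i\}_i)$ is independent of $\tilde z\in\mathcal{M}_{\beta_0}$. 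Decomposing $z^{u,\tilde z}=T_1^{\tilde z}(z^0)+T_2^{\tilde z}(u)$ as in \eqref{T1T2}, I then define the set-valued map
\begin{equation*}
F_{z^0}(\tilde z):=\{T_1^{\tilde z}(z^0)+T_2^{\tilde z}(u): z^{u,\tilde z}(T)=0,\ J(u)\le K\|z^0\|_{L^2}\},
\end{equation*}
and seek a fixed point via Kakutani's theorem on $\mathcal M_{\beta_0}$.

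The verification of the four Kakutani hypotheses proceeds exactly as in the star-like case. Nonemptyness follows from Theorem \ref{thcontrol_lin_tree}; convexity from the linearity of $T_2^{\tilde z}$ and the convexity of $J$; closedness in $L^\infty$ by Aubin--Lions applied to the parabolic solution operator, exactly as in item (i) of the star-like proof. The invariance $F_{z^0}(\mathcal M_{\beta_0})\subset\mathcal M_{\beta_0}$ for $\|z^0\|_{L^\infty}<\zeta_0(\beta_0)$ follows from the a priori $L^\infty$ estimates $\|T_1^{\tilde z}(z^0)\|_{L^\infty}\le C_1(\|\tilde z\|_\infty)\|z^0\|_{L^\infty}$ and $\|T_2^{\tilde z}(u)\|_{L^\infty}\le C_2(\|\tilde z\|_\infty)\|u\|_{L^\infty(Q_{\omega_0})}$ combined with the uniform control bound, by choosing $\zeta_0$ small enough. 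Relative compactness of $F_{z^0}(\mathcal M_{\beta_0})$ in $L^\infty(Q)$ follows from $L^p$-maximal parabolic regularity: since $A_0^{\tilde z^m}z^m+C_0^{\tilde z^m}z^m+Bu^m$ is bounded in $L^p(Q)$ for any $p<\infty$, the solutions lie in $W^{2,1}_p(Q)$ and, for $p$ large enough, embed compactly into $C(\overline Q)$. Upper semicontinuity of $F_{z^0}$ follows from the continuous dependence of $A_0^{\tilde z},C_0^{\tilde z}$ on $\tilde z$ in $L^\infty$, allowing passage to the limit in the equation $z^m=S_1(z^0)+S_2(A_0^{\tilde z^m}z^m+C_0^{\tilde z^m}z^m+Bu^m)$ along a weakly convergent subsequence $u^m\rightharpoonup u$ in $L^2(Q_{\omega_0})$, with the terminal condition $z(T)=0$ preserved by the strong convergence in $C([0,T];[L^2(\Omega)]^{n+1})$.

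The main obstacle, conceptually, is not any single Kakutani step but verifying that the tree-coupled nonlinear system, after freezing $\tilde z$, still yields coefficients admissible for Theorem \ref{thcontrol_lin_tree} uniformly over $\tilde z\in\mathcal M_{\beta_0}$; this is precisely what hypotheses (H1') and (H2') are designed to ensure via the support conditions \eqref{fsuport1} and the non-degeneracy of the linearized coupling coefficients on the nested subdomains $\underline\omega_i$. Once this uniformity is in hand, Kakutani's theorem provides $z\in\mathcal M_{\beta_0}$ with $z\in F_{z^0}(z)$, i.e.\ a control $\overline u\in\mathcal U^*$ for which $z^{\overline u,z}=z$, so that $y^{\overline u}:=\overline y+z$ solves \eqref{nonlinsystemtree} with $y^{\overline u}(T)=\overline y$ and the required $L^\infty$ bound on $\overline u$ coming from the linear estimate together with $\|z^0\|_{L^2}\lesssim\|z^0\|_{L^\infty}=\|y^u(0)-\overline y\|_{L^\infty}$.
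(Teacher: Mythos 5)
Your proposal is correct and follows essentially the same route the paper intends: the paper gives no separate proof of this theorem but explicitly states that the tree-like nonlinear case "may be studied in analogy to the star-like case," i.e.\ by linearizing around $\overline y$, invoking Theorem \ref{thcontrol_lin_tree} for the frozen-coefficient linear system, and running the Kakutani fixed-point argument of Theorem \ref{th_local_contr} verbatim. Your write-up is a faithful and complete instantiation of that analogy, including the key observation that (H1'), (H2') guarantee the frozen coefficients stay in $\mathcal{E}_{2M_0,\delta_0/2,\{\underline\omega_i\}_i,\textbf{k}}$ uniformly over $\mathcal M_{\beta_0}$.
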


\begin{remark}
\begin{enumerate}
\item Our results remain valid if instead of the  operator $\Delta$ we use  general elliptic operators which may be differently chosen in each of the equation of the system: 
\begin{equation}
L_iy_i:=-\sum_{j,k=1}^N D_j(\alpha^{jk}_i D_ky_i)+\sum_{k=1}^N\beta^{k}_i D_ky_i+\gamma_iy_i\quad i=\overline{1,n},
\end{equation}
with general boundary conditions which may be also of  Neumann or Robin type. Here $ (\alpha^{jk}_i)_{j,k}$ satisfy uniform ellipticity conditions in $\Omega$. In our study we  need also to impose  regularity assumptions on the coefficients ( $\alpha^{jk}_i\in W^{1,\infty}(\Omega),\beta^{k}_i, \gamma_i\in L^\infty(\Omega)$); these regularity assumptions allow the development of the bootstrap argument based on the regularizing properties of the parabolic flow when establishing an $L^\infty$ framework for the controllability problem.

	%\item  De pus primul sistem si de spus ca nu e controlabil (ala e star like) inclusiv scri care e matricea Kalman.
\item The hypotheses on the support of the coupling coefficients is essential for our approach to the controllability problem.  In fact, for the systems we consider with the same type of couplings but with constant coupling coefficients controllability no longer occurs. 
take for example the following system with a star-type coupling ($\alpha$ and $\beta$ are fixed real constants):

\begin{equation}\label{rem_ex1}
\left\lbrace
\begin{array}{ll}
D_tz_{0}-\Delta z_0=\chi_{\omega_0}u, & \text{in }(0,T)\times\Omega,\\
D_tz_{1}-\Delta z_1=\alpha z_0,\,  &\text{in }(0,T)\times\Omega,\\
D_tz_{2}-\Delta z_2=\beta z_0,\,  &\text{in }(0,T)\times\Omega,\\
z_0=z_1=z_{2}=0, &\text{on }(0,T)\times\partial\Omega.\\
\end{array}
\right.
\end{equation}
Considering the results in \cite{khodja_burgos2},\cite{khodja_burgos1}, null controllability occurs if and only if the Kalman rank condition $\text{rank} [A_0|B]=3.$ However, in this situation the Kalman matrix is
$[A_0|B]=
\begin{pmatrix}
1&0&0\\
0&\alpha&0\\
0&\beta&0
\end{pmatrix}$
and  its rank is  $2$.

Also, if we consider  the parabolic system with tree-like couplings \eqref{linsystem_tree} in  \S2 Preliminaries, with constant coefficients $c_j=0$, $a_{10}=a_{20}=a_{31}=a_{41}=1$, the  Kalman matrix   

 $[A_0|B]=
\begin{pmatrix}
1&0&0&0&0\\
0&1&0&0&0\\
0&1&0&0&0\\
0&0&1&0&0\\
0&0&1&0&0\\
\end{pmatrix}$
and this has rank $3$; thus the system is not null controllable.

In fact one may see the results in this paper more as an extension of the results concerning cascade-like parabolic systems with nonconstant coefficients (see \cite{teresa_burgos}).  
\end{enumerate}
\end{remark}
\newpage


\begin{thebibliography}{10}
	
	\bibitem{khodja_burgos2}
	F.~Ammar~Khodja, A.~Benabdallah, C.~Dupaix, and M.~Gonz\'{a}lez-Burgos.
	\newblock A generalization of the {K}alman rank condition for time-dependent
	coupled linear parabolic systems.
	\newblock {\em Differ. Equ. Appl.}, 1(3):427--457, 2009.
	
	\bibitem{khodja_burgos1}
	Farid Ammar-Khodja, Assia Benabdallah, C\'{e}dric Dupaix, and Manuel
	Gonz\'{a}lez-Burgos.
	\newblock A {K}alman rank condition for the localized distributed
	controllability of a class of linear parbolic systems.
	\newblock {\em J. Evol. Equ.}, 9(2):267--291, 2009.
	
	\bibitem{ammar_chouly}
	Farid Ammar~Khodja, Franz Chouly, and Michel Duprez.
	\newblock Partial null controllability of parabolic linear systems.
	\newblock {\em Math. Control Relat. Fields}, 6(2):185--216, 2016.
	
	\bibitem{bar2000}
	V.~{Barbu}.
	\newblock {Exact controllability of the superlinear heat equation.}
	\newblock {\em {Appl. Math. Optim.}}, 42(1):73--89, 2000.
	
	\bibitem{bar12002}
	Viorel {Barbu}.
	\newblock {The Carleman inequality for linear parabolic equations in $L^q$
		norm.}
	\newblock {\em {Differential Integral Equations}}, 15(5):513--525, 2002.
	
	\bibitem{corgueros2010}
	Jean-Michel {Coron}, Sergio {Guerrero}, and Lionel {Rosier}.
	\newblock {Null controllability of a parabolic system with a cubic coupling
		term.}
	\newblock {\em {SIAM J. Control Optim.}}, 48(8):5629--5653, 2010.
	
	\bibitem{fercarzua2000}
	Enrique Fern\'{a}ndez-Cara and Enrique Zuazua.
	\newblock Controllability for blowing up semilinear parabolic equations.
	\newblock {\em C. R. Acad. Sci. Paris S\'{e}r. I Math.}, 330(3):199--204, 2000.
	
	\bibitem{ferzua2000}
	Enrique {Fern\'andez-Cara} and Enrique {Zuazua}.
	\newblock {The cost of approximate controllability for heat equations: The
		linear case.}
	\newblock {\em {Adv. Differ. Equ.}}, 5(4-6):465--514, 2000.
	
	\bibitem{furima1996}
	A.V. {Fursikov} and O.Yu. {Imanuvilov}.
	\newblock {\em {Controllability of evolution equations.}}
	\newblock Seoul: Seoul National Univ., 1996.
	
	\bibitem{teresa_burgos}
	Manuel Gonz\'{a}lez-Burgos and Luz de~Teresa.
	\newblock Controllability results for cascade systems of {$m$} coupled
	parabolic {PDE}s by one control force.
	\newblock {\em Port. Math.}, 67(1):91--113, 2010.
	
	\bibitem{lady}
	O.~A. Ladyzenskaja, V.~A. Solonnikov, and N.~N. Uralceva.
	\newblock {\em Linear and quasilinear equations of parabolic type}.
	\newblock Vol. 23. American Mathematical Society, Providence, R.I., 1968.
	
	\bibitem{balch}
	K\'{e}vin Le~Balc'h.
	\newblock Controllability of a {$4\times 4$} quadratic reaction-diffusion
	system.
	\newblock {\em J. Differential Equations}, 266(6):3100--3188, 2019.
	
	\bibitem{lissy_zuazua}
	Pierre Lissy and Enrique Zuazua.
	\newblock Internal observability for coupled systems of linear partial
	differential equations.
	\newblock {\em SIAM J. Control Optim.}, 57(2):832--853, 2019.
	
	\bibitem{olive1}
	Guillaume Olive.
	\newblock Null-controllability for some linear parabolic systems with controls
	acting on different parts of the domain and its boundary.
	\newblock {\em Math. Control Signals Systems}, 23(4):257--280, 2012.
	
	\bibitem{vra_c0}
	Ioan~I. Vrabie.
	\newblock {\em {$C_0$}-semigroups and applications}, volume 191 of {\em
		North-Holland Mathematics Studies}.
	\newblock North-Holland Publishing Co., Amsterdam, 2003.
	
\end{thebibliography}
\end{document}